\newcommand{\mylabel}[2]{#2\def\@currentlabel{#2}\label{#1}}
\definecolor{vegasgold}{rgb}{0.77, 0.7, 0.35}
\definecolor{darkgoldenrod}{rgb}{0.72, 0.53, 0.04}
\definecolor{gold(metallic)}{rgb}{0.83, 0.69, 0.22}
\newtheorem{theorem}{Theorem}[section]
\newtheorem{lemma}[theorem]{Lemma}
\newtheorem{ass}[theorem]{Assumption}
\newtheorem{definition}[theorem]{Definition}
\newtheorem{remark}[theorem]{Remark}
\newtheorem{proposition}[theorem]{Proposition}
\newcommand{\llbracket}{[\![}
\newcommand{\rrbracket}{]\!]}
\newcommand{\Zp}{\mathbb{Z}_p}
\newcommand{\ord}{\mathrm{ord}}
\newcommand{\Z}{\mathbb{Z}}
\newcommand{\Q}{\mathbb{Q}}
\newcommand{\F}{\mathbb{F}}
\newcommand{\cO}{\mathcal{O}}
\newcommand{\Hom}{\mathrm{Hom}}
\newcommand{\Sel}{\mathrm{Sel}}
\DeclareSymbolFont{cyrletters}{OT2}{wncyr}{m}{n}
\DeclareMathSymbol{\Sha}{\mathalpha}{cyrletters}{"58}
\newcommand{\op}[1]{\operatorname{#1}}
\newcommand\smallmtx[4] { {\begin{psmallmatrix}
 #1 & #2 \\
 #3 & #4 \\
 \end{psmallmatrix} } }
\numberwithin{equation}{section}
\begin{document}

\title{An analogue of Kida's formula for Mazur--Tate elements}
\author[N.~Pratap]{Naman Pratap\, \orcidlink{0009-0004-3901-8141}}
\address[Pratap]{Centre for Mathematical Sciences, Wilberforce Road,
Cambridge, CB3 0WA, United Kingdom}
\email{namanpratap2001@gmail.com}
\author[A.~Ray]{Anwesh Ray\, \orcidlink{0000-0001-6946-1559}}\thanks{Corresponding author: Anwesh Ray.}
\address[Ray]{Chennai Mathematical Institute, H1, SIPCOT IT Park, Kelambakkam, Siruseri, Tamil Nadu 603103, India}
\email{ar2222@cornell.edu}

\subjclass[2010]{11R23, 11G05}
\keywords{Mazur--Tate elements, p-adic \textit{L}-functions, Kida's formula, Riemann--Hurwitz genus formula, Iwasawa invariants}

\maketitle
\begin{abstract}
We prove an analogue of Kida’s formula for the Iwasawa invariants of the Mazur--Tate elements attached to elliptic curves over~$\Q$. Let $p$ be an odd prime and let $L/K$ be a Galois extension of abelian number fields with $p$-power Galois group. For an elliptic curve $E/\Q$, we study the Mazur--Tate elements over the finite layers of the cyclotomic $\Z_p$-extensions of $K$ and $L$. We show that the vanishing of the $\mu$-invariant is preserved in the extension: if the level-$n$ Mazur--Tate element over $K$ has $\mu=0$, then the corresponding element over $L$ also has $\mu=0$. Moreover, the associated $\lambda$-invariants satisfy an explicit transition formula. This parallels the work of Hachimori--Matsuno on Selmer groups and of Matsuno on $p$-adic $L$-functions. As an application, we obtain an analogue of Kida’s formula for the analytic Iwasawa invariants associated to Pollack’s signed $p$-adic $L$-functions. Since our results apply to elliptic curves with any reduction type at $p$ under mild hypotheses, including those with additive reduction, we also obtain a Kida-type formula for the $p$-adic $L$-functions constructed by Delbourgo for elliptic curves with unstable additive reduction. In particular, because Mazur--Tate elements approximate $p$-adic $L$-functions in the limit, our results simultaneously unify all previously known cases of Kida’s formula for analytic Iwasawa invariants.
\end{abstract}

\section{Introduction}
\subsection{An arithmetic analogue of the Riemann--Hurwitz formula}
\par The classical Riemann--Hurwitz formula gives a precise relation between the genera of two compact Riemann surfaces $X$ and $Y$, where $\varphi: Y\rightarrow X$ is a branched cover. Let $g_Y$ (resp. $g_X$) denote the genus of $X$ (resp. $Y$). Then one has that
\[
2g_Y - 2 \;=\; \deg(\varphi)\,(2g_X - 2)\;+\; \sum_{P\in Y} (e(P)-1),
\]
where the sum runs over all points $P\in Y$, with $e(P)$ the ramification index of $\varphi$ at $P$. For an unramified cover, the genera are related by \[2g_Y-2=\deg(\varphi)(2g_X-2).\] The presence of ramification introduces further correction terms in the formula. The quantity $(e(P)-1)$ depends only on the point $P$ and is thought of as a local term, while the term $\deg(\varphi)(2g_X-2)$ is global in nature.

\par A remarkable analogue of this relation exists in arithmetic. In place of coverings of Riemann surfaces one considers extensions of number fields. The Iwasawa $\lambda$-invariant plays an analogous role to the genus of a Riemann surface. Let $K$ be a number field and $p$ be a prime number. Denote by $K(\mu_{p^\infty})$ the infinite Galois extension extension of $K$ obtained by adjoining all $p$-power roots of unity. Let $\Z_p$ denote the ring of $p$-adic integers. The cyclotomic $\Z_p$-extension is the unique Galois extension $K_\infty/K$ contained in $K(\mu_{p^\infty})$ with Galois group $\op{Gal}(K_\infty/K)$ isomorphic to $\Z_p$. For each integer $n\ge 1$, write $K_{(n)}$ for the unique intermediate extension of $K_\infty/K$ of degree $p^n$ over $K$.
\par In \cite{KIDA1980519}, Kida proved an analogue of the Riemann--Hurwitz genus formula. Assume that $K$ is a CM field and let $K_{(n)}^+$ be the maximal totally real subfield of $K_{(n)}$. We set \[h_n:=h(K_{(n)}),\quad h_n^+:=h(K_{(n)}^+)\quad \text{and}\quad h_n^-:=h_n/h_n^+.\] Iwasawa showed (see \cite{Iwasawa:1959} and \cite{iwasawa1973zl}) that there exist nonnegative integers $ \mu_{K}^{-},\lambda_K^{-}\in \Z_{\geq 0}$ and $\nu_K^{-} \in \Z$ such that for $n$ large, one has
\begin{equation} \label{Iwasawa}
{\rm ord}_{p}(h_{n}^{-}) = \mu_K^{-}  \ell^{n} + \lambda_K^{-} n + \nu_K^{-}, 
\end{equation}
where ${\rm ord}_{p}$ is the usual $p$-adic valuation on $\mathbb{Q}$. Let $L/K$ be a Galois extension of CM number fields for which ${\rm Gal}(L/K)$ is a $p$-group, and assume for simplicity that $p> 2$ and that $K$ contains the $p$-th roots of unity.  Kida showed that if $\mu_{K}^{-} = 0$, then $\mu_{L}^{-} = 0$, and 
\begin{equation} \label{kida}
2\lambda_{L}^{-} - 2 = [L_{\infty}:K_{\infty}]( 2\lambda_{K}^{-} - 2) + \sum_{w}(e_{w} - 1),
\end{equation}
where the sum is over all nonarchimedean places of $L_{\infty}$ that split completely in $L_{\infty}/L_{\infty}^{+}$ and that do not lie above $p$. Gras \cite{Gras} had obtained similar transition formulae for the Iwasawa invariants of Kubota--Leopoldt $p$-adic $L$-functions, which were later extended to the case of CM fields by Sinnott in \cite{Sinnott}.  

\subsection{Iwasawa theory of elliptic curves}
In number theory, many Galois theoretic structures arise from geometry and analysis such as elliptic curves and modular forms. Their associated Galois representations and L-functions lead to the study of Iwasawa theoretic phenomena. Mazur~\cite{Mazur1972} inititated the Iwasawa theory for $p$-ordinary abelian varieties. Let $p$ be a prime number, $K$ a number field and $A_{/K}$ be an abelian variety which is ordinary at all primes $v|p$ of $K$. Mazur studied the structure and growth of the $p$-primary Selmer group of $A$ along the cyclotomic $\mathbb{Z}_p$-extension of $K$. In the case of an elliptic curve $E/\mathbb{Q}$ and a prime $p$ of good ordinary reduction for $E$, Kato~\cite{kato1} and Rubin~\cite{Rubin2} proved that the $p$-primary Selmer group $\mathrm{Sel}_{p^\infty}(E/\mathbb{Q}_\infty)$ is a cotorsion module over the Iwasawa algebra $\Lambda=\mathbb{Z}_p[[\Gamma]]$ with $\Gamma:=\operatorname{Gal}(\mathbb{Q}_\infty/\mathbb{Q})$. Consequently, the Pontryagin dual of the Selmer group admits a well-defined characteristic ideal. Choose a topological generator $\gamma\in \Gamma$ and set $T:=\gamma-1$. Then $\Lambda$ is identified with the formal power series ring $\Z_p\llbracket T\rrbracket$. The characteristic ideal is principal, and a choice of generator $f_{\op{Iw}}(T)$ is the \emph{Iwasawa polynomial}. By the Weierstrass preparation theorem, $f_{\op{Iw}}(T)=p^{\mu}P(T)u(T)$ where $\mu\in \Z_{\geq 0}$, $P(T)$ is a distinguished polynomial and $u(T)$ is a unit in $\Lambda$. The quantity $\mu$ is the $\mu$-invariant and the $\lambda$-invariant is defined to be the degree of $P(T)$.
\par A key ingredient in Mazur’s foundational study of Iwasawa theory for elliptic curves is the \emph{control theorem} for the Selmer group. This result compares the Selmer groups over the finite layers $\Q_{(n)}$ of the cyclotomic $\Z_p$-extension with the Selmer group over the full infinite extension $\Q_\infty$. It asserts that, up to a finite and uniformly bounded error term, the $p$-primary Selmer group over $\Q_{(n)}$ is obtained by taking $\Gamma^{p^n}$-invariants of the Selmer group over $\Q_\infty$. Consequently, Mazur’s control theorem yields an Iwasawa-theoretic formula describing the asymptotic growth of the $p$-primary part of the Tate--Shafarevich group of $E/\Q_{(n)}$ as $n\to\infty$, directly paralleling Iwasawa’s classical formula for $p$-primary class groups in cyclotomic towers. More generally, the same formalism extends to modular forms via their associated $p$-adic Galois representations, with the case of elliptic curves arising from weight two newforms as a special instance.

\par On the analytic side, one may study the variation of the special values of the complex $L$-function of $E$ along the layers of the cyclotomic extension. Using the modular symbols attached to the weight two cuspidal eigenform corresponding to $E$ by modularity, Mazur and Swinnerton-Dyer constructed in \cite{MSD74} a $p$-adic $L$-function for $E$ when $p$ is a prime of good ordinary reduction. This $p$-adic analytic function interpolates the critical values $L(E,\chi,1)$ as $\chi$ ranges over finite-order characters of $\operatorname{Gal}(\Q_{(n)}/\Q)$. Moreover, the $p$-adic $L$-function naturally defines an element of the Iwasawa algebra $\Lambda \cong \Z_p\llbracket T\rrbracket$, thereby giving rise to \emph{analytic Iwasawa invariants}. The Iwasawa Main Conjecture predicts a precise equality between these analytic invariants associated with the $p$-adic $L$-function and the corresponding algebraic invariants arising from the structure of the Selmer group.

\par For primes of supersingular reduction, the situation is more subtle since in this setting the Selmer group over the cyclotomic $\Z_p$-extension is no longer cotorsion over the Iwasawa algebra. Kobayashi \cite{kobayashi} introduced the \emph{plus and minus Selmer groups} $\op{Sel}^{\pm}(E/\Q_{(n)})$, which refine the usual Selmer group by imposing conditions on the formal group at $p$. Correspondingly, Pollack constructed \emph{signed $p$-adic $L$-functions} $L_p^\pm(E)$ that interpolate critical $L$-values twisted by characters of $\op{Gal}(\Q_{(n)}/\Q)$, but with plus and minus signs reflecting the local conditions at $p$. 
\par In \cite{MT}, Mazur and Tate introduced modular elements $\theta_M(f)$ for integers $M$ coprime to $p$, now known as \emph{Mazur--Tate elements}, which lie in the group ring $\Z[\op{Gal}(\Q(\mu_M)/\Q)]$ and encode special values of the complex $L$-function of a cuspidal eigenform $f$. Of particular interest for Iwasawa-theoretic applications are the $p$-adic Mazur--Tate elements, which may be viewed inside the group rings $\Lambda_n := \Z_p[\op{Gal}(\Q_{(n)}/\Q)]$. Each $\Lambda_n$ is naturally identified with a quotient of the Iwasawa algebra $\Lambda$, and under this identification the Mazur--Tate elements inherit associated $\mu$- and $\lambda$-invariants. These provide a finite-level analogue of the analytic Iwasawa invariants and play an important role in understanding the variation of $L$-values and Selmer groups across the cyclotomic tower. For elliptic curves with additive reduction satisfying further conditions, Delbourgo \cite{del-compositio} proved that the Selmer group is cotorsion over the Iwasawa algebra and constructed $p$-adic \textit{L}-functions, formulating an analogue of the Main conjecture.

When $p$ is a prime of good ordinary reduction, Mazur--Tate elements approximate the $p$-adic $L$-function, which can be realized as the inverse limit of ($p$-stabilized) Mazur--Tate elements under the norm maps $\Lambda_{n}\rightarrow \Lambda_{n-1}$. Analogous to the Iwasawa Main conjecture, these finite-layer analogues are expected to encode the structure of the \emph{initial Fitting ideal} of the Selmer group over $\Q_{(n)}$. This constitutes the \emph{refined main conjecture}, studied in \cite{kimkurihara} for elliptic curves with both ordinary and supersingular reduction at $p$. Mazur--Tate elements are also explicitly related to Kato's zeta elements via a pairing defined by Kurihara \cite[Lemma 7.2]{kuri2002}, which is a key ingredient in the proof of the refined conjectures for supersingular primes.
\subsection{Kida-type formulae for elliptic curves and modular forms: Known results}

In \cite{Hachimori1999581}, Hachimori and Matsuno established an analogue of Kida's classical formula for the $\mu$- and $\lambda$-invariants of $p$-primary Selmer groups of elliptic curves. Let $E/\Q$ be an elliptic curve with good ordinary reduction at an odd prime $p$, and let $L/\Q$ be a $p$-cyclic extension. Assuming $\mu\bigl(\Sel_{p^\infty}(E/\Q_\infty)\bigr)=0$, the Selmer group $\Sel_{p^\infty}(E/L_\infty)$ is cotorsion over the Iwasawa algebra, with $\mu\bigl(\Sel_{p^\infty}(E/L_\infty)\bigr)=0$. Moreover, the $\lambda$-invariants satisfy
\[
\lambda\bigl(\Sel_{p^\infty}(E/L_\infty)\bigr) = [L_\infty:\Q_\infty] \, \lambda\bigl(\Sel_{p^\infty}(E/\Q_\infty)\bigr) + \sum_{\tilde{w} \in P_1} \bigl(e(\tilde{w}) - 1\bigr) + 2 \sum_{\tilde{w} \in P_2} \bigl(e(\tilde{w}) - 1\bigr),
\]
where $P_1$ (resp. $P_2$) denotes the set of primes of $L_\infty$ above which $E$ has split multiplicative reduction (resp. has a $p$-torsion point locally), and $e(\tilde{w})$ is the ramification index of $\tilde{w}$ over the prime of $\Q_\infty$ below it. This result shows that the vanishing of the $\mu$-invariant is preserved in $p$-cyclic extensions and provides an explicit formula controlling the change in $\lambda$-invariants. Analogous results hold for elliptic curves with multiplicative reduction.

\par The analytic counterpart of this formula was established in \cite{MATSUNO200080} for the $p$-adic $L$-function of elliptic curves over $\Q$ at primes of good ordinary or multiplicative reduction. These Kida-type formulae for the analytic Iwasawa invariants match the algebraic invariants of the $p$-primary Selmer group, as predicted by the Iwasawa Main conjecture over $p$-extensions. In \cite{PWKida}, Pollack and Weston extended these results to modular
eigenforms and, on the algebraic side, to a broad class of ordinary Galois
representations, including representations arising from Hilbert modular forms
under suitable ordinarity hypotheses. They also treated supersingular primes for
elliptic curves. Their method uses congruences between modular forms, and hence
between the associated residual Galois representations, to compare the relevant
Iwasawa invariants.

\subsection{Main result}

In this article, we extend the methods of \cite{Hachimori1999581}, \cite{MATSUNO200080}, and \cite{Sinnott} to obtain an analogue of Kida's formula for the Iwasawa invariants of Mazur--Tate elements attached to elliptic curves. Let $E$ be an elliptic curve over $\Q$ and $p$ be an odd prime number. For $n\in \Z_{\geq 1}$, we consider Mazur--Tate elements $\Theta_n(E/K)$ over abelian extensions $K/\Q$. By the modularity theorem, there exists a weight $2$ newform $f \in S_2(\Gamma_0(N_E))$ associated to $E$, whose $L$-function agrees with that of $E$. In particular, for any number field $K$ and any finite order character $\chi$, we have
\[
L(E/K,\chi,s) = L(f/K,\chi,s).
\]
The Mazur--Tate elements are constructed to interpolate the special values $L(f/K,\chi,1)$ for primitive characters $\chi$ of $\op{Gal}(K_{(n)}/K)$. 
\par Let $S_{\op{add}}$ be the set of rational primes $\ell$ at which $E$ has additive reduction. We introduce the following hypothesis for a given number field $F$:
\begin{ass}[\textbf{Add}]
    $E$ has additive reduction at any prime of $F$ lying above $S_{\op{add}}$.
\end{ass}
\noindent Let $L/K$ be a $p$-extension with both $L$ and $K$ abelian over $\Q$ and $n\geq 1$. We define sets of primes $P_1$ and $P_2$ of $L_{(n)}$ as follows:
    \[\begin{split}&P_1 := \{w : w\nmid p,\; E \text{ has split multiplicative reduction at } w\},\\
        &P_2 := \{w : w\nmid p,\; E \text{ has good reduction at } w,\; E(L_{(n),w})[p]\neq 0\}.
        \end{split}
    \]
    Let $n_L,n_K$ be integers with \[L\cap \Q_\infty=\Q_{(n_L)}\quad\text{and}\quad K\cap \Q_\infty=\Q_{(n_K)}.\] In other words, $L_{(n)}=L\Q_{(n+n_L)}$ and $K_{(n)}=K\Q_{(n+n_K)}$. In particular, $L_{(n)}$ contains $K_{(n+n_L-n_K)}$. Denote by $e_{L_{(n)}/K_{(n+n_L-n_K)}}(w)$ (resp. $f_{L_{(n)}/K_{(n+n_L-n_K)}}(w)$) the ramification index (resp. inertial degree) of $w$ in $L_{(n)}/K_{(n+n_L-n_K)}$. Note that if $n\geq \op{ord}_p([L:K])-n_L$, the extension $L_{(n)}/K_{(n+n_L-n_K)}$ is totally ramified for any prime $w\nmid p$ of $L_{(n)}$. In other words, for $n\geq \op{ord}_p([L:K])-n_L$, \[f_{L_{(n)}/K_{(n+n_L-n_K)}}(w)=1.\]

\begin{ass}[\textbf{K-p}]
    One of the following conditions are satisfied:
    \begin{enumerate}
        \item $p\nmid [K:\Q]$, 
        \item $p\mid[K:\Q]$ and $n\geq \op{ord}_p([L:\Q])$.
    \end{enumerate}
\end{ass}
\noindent The main result of this paper is the following Kida-type formula for Mazur--Tate elements.  
\begin{theorem}[Kida's formula for elliptic curves]\label{thm: kida formula!}
     Let $n\geq 1$ and assume that the conditions \textbf{(Add)} and \textbf{(K-p)} are satisfied. If
    \[
        \mu(\Theta_{\,n+n_L-n_K}(E/K))=0,
    \]
    then $\mu(\Theta_n(E/L))=0$, and
    \[
    \begin{aligned}
        \lambda(\Theta_n(E/L)) 
        &= [L_\infty : K_\infty] \, \lambda(\Theta_{\,n+n_L-n_K}(E/K))
        + \sum_{w \in P_1} f_{L_{(n)}/K_{(n+n_L-n_K)}}(w)\bigl(e_{L_{(n)}/K_{(n+n_L-n_K)}}(w)-1\bigr) \\
        &\quad + 2 \sum_{w \in P_2} f_{L_{(n)}/K_{(n+n_L-n_K)}}(w)\bigl(e_{L_{(n)}/K_{(n+n_L-n_K)}}(w)-1\bigr).
    \end{aligned}
    \]
\end{theorem}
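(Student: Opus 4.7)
My plan is to adapt the character-by-character analysis of Matsuno \cite{MATSUNO200080} and the ramification bookkeeping of Hachimori--Matsuno \cite{Hachimori1999581} to the finite-level setting of Mazur--Tate elements, following the induction scheme used by Pollack--Weston \cite{PWKida}. The first step is to reduce to the case $[L:K] = p$ by filtering the extension through a tower of cyclic degree-$p$ intermediate fields; the local sets $P_1$ and $P_2$, the ramification indices, the inertial degrees, and the hypotheses $(\textbf{Add})$ and $(\textbf{K-p})$ all behave multiplicatively along such towers, so the general statement will follow by iterating the cyclic case.

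For $[L:K] = p$, I work with the specializations of Mazur--Tate elements at characters of the relevant Galois groups. Each $\chi$-specialization of $\Theta_m(E/F)$ is, up to a period and an explicit Euler correction at $p$, a critical $L$-value $L(E/F, \chi, 1)$, and the inductive property of Artin $L$-functions gives
\[
L(E/L, \chi, 1) = \prod_{\eta} L(E/K, \eta, 1)
\]
for $\chi$ a character of $\op{Gal}(L_{(n)}/L)$ and $\eta$ ranging over its extensions to $\op{Gal}(L_{(n)}/K)$. Translating this into an identity between algebraic $\chi$-specializations, and isolating the discrepancy into local correction factors at primes ramified in $L/K$, produces a comparison in the group ring between $\Theta_n(E/L)$ and the image of $\Theta_{n + n_L - n_K}(E/K)$ under the map induced by the quotient $\op{Gal}(L_{(n)}/L) \twoheadrightarrow \op{Gal}(K_{(n + n_L - n_K)}/K)$. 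Hypothesis $(\textbf{K-p})$ enters here: in case (1) it allows a clean decomposition of $\Z_p[\op{Gal}(K/\Q)]$ into $\chi$-isotypic components, and in case (2) it ensures that the finite level $n$ has absorbed all the $p$-power ramification arising from $K/\Q$.

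The technical core is the local Euler-factor computation at each prime $w \nmid p$ of $L_{(n)}$ ramified in $L_{(n)}/K_{(n + n_L - n_K)}$. Assumption $(\textbf{Add})$ guarantees that $E$ has the same reduction type at $w$ as at the prime of $K$ below it; a direct case analysis of $L_w(E, X)^{-1}$ then yields, in parallel with \cite{MATSUNO200080}, a $\lambda$-contribution of $f_{L_{(n)}/K_{(n+n_L-n_K)}}(w)\bigl(e_{L_{(n)}/K_{(n+n_L-n_K)}}(w) - 1\bigr)$ when $w \in P_1$, of twice this quantity when $w \in P_2$, and of zero otherwise. Because these local factors are products of units with distinguished polynomials, the vanishing of $\mu$ is preserved; summing over ramified places yields the stated $\lambda$-formula.

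The main obstacle I anticipate is the bookkeeping of these Euler factors at finite level. In Matsuno's $\Lambda$-adic setting the Weierstrass preparation theorem applies directly to each factor; at level $n$ one must instead track how distinguished polynomials and unit factors interact after specializing $\Lambda \twoheadrightarrow \Lambda_n$, and verify that no cancellation of $p$-powers or degrees occurs under this truncation. Hypothesis $(\textbf{K-p})$ is designed precisely to rule out such pathologies, and confirming this carefully, especially in case (2) where $p \mid [K:\Q]$, is where the most delicate work will lie.
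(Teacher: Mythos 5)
There is a genuine gap at the core of your proposal. You plan to derive, from the Artin factorization $L(E/L,\chi,1)=\prod_\eta L(E/K,\eta,1)$, an exact ``comparison in the group ring'' between $\Theta_n(E/L)$ and the image of $\Theta_{n+n_L-n_K}(E/K)$, with the discrepancy packaged into local Euler-type correction factors at ramified primes. No such exact identity exists, and the Artin factorization cannot produce one: by Definition \ref{defn: MT elements over K} the element $\Theta_n(E/L)$ is \emph{already} the product of the twisted elements $\Theta^{M_{\chi\psi}}_{n+n_L}(f,\chi\psi,T)$ over all characters of $\op{Gal}(L/\Q)$, so the factorization of $L$-values is built into the definition and carries no new information relating $L$ to $K$. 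The actual content of the theorem is the comparison of $\Theta^{M_{\chi\psi}}_{n+n_L}(f,\chi\psi,T)$ with $\Theta^{M_{\psi}}_{n+n_L}(f,\psi,T)$, i.e.\ dropping the $p$-power-order character $\chi$, and this is \emph{not} multiplication by an Euler factor: the two elements interpolate genuinely different $L$-values, and they are related only by the congruence $\Theta^{M}_{n}(f,\chi\psi,T)\equiv\Theta^{M}_{n}(f,\psi,T)\pmod{\pi}$, which holds because $\chi(x)\equiv 1\pmod{\pi}$ for $\chi$ of $p$-power order. This congruence is invisible to complex $L$-values, and it is exactly where the hypothesis $\mu(\Theta_{n+n_L-n_K}(E/K))=0$ is consumed: a mod-$\pi$ congruence transfers $\mu=0$ and forces equality of $\lambda$-invariants, but says nothing without the $\mu$-vanishing. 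Your claim that ``the local factors are products of units with distinguished polynomials'' so $\mu$-vanishing is preserved cannot be repaired at finite level (Weierstrass preparation fails in $\Lambda_n$, as you note); the paper instead computes directly the first nonzero coefficient mod $\pi$ of the genuine Euler factor $h_\ell(\psi,T)=a_\ell-\psi(\ell)(1+T)^{t_n(\ell)}-\epsilon_N(\ell)\psi(\ell)^{-1}(1+T)^{-t_n(\ell)}$, which arises only from the exact level-change identity $\Theta^{M\ell}_n(f,\psi,T)=h_\ell(\psi,T)\,\Theta^M_n(f,\psi,T)$ of Lemma \ref{lem: pi n compatibility} (same character $\psi$, tame level raised by $\ell$), giving $\lambda(h_\ell)=g_{\psi,n}(\ell)\in\{0,\,p^a,\,2p^a\}$ via the expansion $(1+T)^{up^a}\equiv 1+uT^{p^a}+\binom{u}{2}T^{2p^a}\pmod{(\pi,T^{3p^a})}$.

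Your reduction step also diverges from what is needed. The paper does not reduce to $[L:K]=p$; it reduces to $p\nmid[K:\Q]$ via Lemma \ref{lemma: reduction to prime-to-$p$ subfield $M\subset K\subset L$}, so that every character of $\op{Gal}(L/\Q)$ splits cleanly as $\psi\chi$ with $\psi$ of prime-to-$p$ order on $\op{Gal}(K/\Q)$ and $\chi$ of $p$-power order --- precisely the decomposition feeding the congruence above. A cyclic degree-$p$ filtration does not by itself produce this decomposition, and your assertion that hypothesis \textbf{(K-p)} ``behaves multiplicatively along such towers'' glosses over the real computation: the transitivity of the $\lambda$-formula requires the identity
\[
[L_\infty:K_\infty]\bigl(e_{K_{(n+n_L-n_K)}/M_{(n+n_L)}}(v)-1\bigr)=\sum_{w\mid v}\bigl(e_{L_{(n)}/M_{(n+n_L)}}(w)-1\bigr)-\sum_{w\mid v}\bigl(e_{L_{(n)}/K_{(n+n_L-n_K)}}(w)-1\bigr),
\]
which uses $[L_\infty:K_\infty]=e\cdot g$ and the hypothesis $n\geq\op{ord}_p([L:\Q])$ to kill the inertial degrees; this is where \textbf{(K-p)}(2) actually enters, not in ruling out truncation pathologies of distinguished polynomials. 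Finally, the counting of characters with $\chi(\ell)=0$ (via $|\{\chi:\chi(\ell)=0\}|=[L':\Q](1-e_{L'/\Q}(\ell)^{-1})$) and the summation $\sum_\psi g_{\psi,n}(\ell)=\delta_w g_{K_{(n)}}(\ell)$ of Proposition \ref{prop: sum over g_chi} are the steps that convert the Euler-factor data into the $f\cdot(e-1)$ terms; your sketch attributes this to a ``direct case analysis of $L_w(E,X)^{-1}$,'' which conflates the classical local Euler factor with the group-ring element $h_\ell$ whose $\lambda$-invariant depends on the splitting data $t_n(\ell)=up^a$ and not merely on the reduction type.
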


The theorem above demonstrates that such Kida-type formulae can indeed be established for Mazur--Tate elements, even for modular forms whose $p$-adic $L$-functions are not known to exist or be bounded. In particular, these results hold without imposing any hypotheses on the local behaviour of $E$ at $p$.
\par For elliptic curves with semistable reduction at $p>2$, one may attach \(p\)-adic \(L\)-functions not only over \(\Q\) but also over abelian extensions \(K/\Q\) obtained by combining the \(p\)-adic \(L\)-functions twisted by the characters of \(\op{Gal}(K/\Q)\), cf.~\eqref{defn of Lp(E/K)}. The same construction, when applied to the signed \(p\)-adic \(L\)-functions associated to \(E\), likewise produces signed \(p\)-adic \(L\)-functions over \(K\). In the semistable case, the Iwasawa invariants $\mu$ and $\lambda$ of $\Theta_n(E/\Q)$ for $n\gg 0$ agrees with the $\mu$ and $\lambda$ invariants of the $p$-adic $L$-function of $E$. Thus, the result above implies Matsuno's main result \cite{MATSUNO200080}. For elliptic curves $E_{/\Q}$ with good supersingular reduction at $p$ satisfying $a_p(E)=0$, we obtain an analogue of Kida's formula for the analytic Iwasawa invariants associated to signed $p$-adic \textit{L}-functions (see Theorem \ref{kida for signed p-adic $L$-functions} for further details). Similar formulae for the algebraic Iwasawa invariants for signed Selmer groups have been proven by Hatley and Lei \cite{hatleylei}, and there are further refinements due to Forr\'as and M\"uller \cite{forrasmuller}.
\par In the additive reduction case, an analogue of Kida’s formula was established by the second-named author and Shingavekar \cite{RSadditivekida} under the potentially good ordinary condition of Delbourgo \cite{del-compositio}, which requires that \(E\) acquire good ordinary reduction at \(p\) over an extension contained in \(\Q(\mu_p)\). Under this hypothesis, Delbourgo constructed a \(p\)-adic \(L\)-function for \(E\) by relating the newform \(f_E\) to an ordinary weight~\(2\) newform \(\tilde f\) of lower level obtained via twisting by a power of the Teichmüller character. This yields a bounded \(p\)-adic \(L\)-function to which Matsuno’s method applies directly, giving a Kida-type formula in this setting (see Theorem \ref{additive reduction kida formula}).

\subsection{Outlook}
These formulae align well with the philosophy of the \emph{refined main conjectures} as discussed in \cite{kimkurihara}. Namely, we get a Kida formula for the $\lambda$-invariant at every finite layer of the cyclotomic $\Zp$-extension, indicating that an algebraic analogue of this would together imply such refined conjectures for base fields that are abelian extensions of $\Q$. As an application of our main result, we show that the Kida formula obtained by Matsuno in  \cite{MATSUNO200080} for $p$-ordinary elliptic curves and by Pollack--Weston in \cite{PWKida} for $p$-supersingular modular forms follows from our Kida formula for Mazur--Tate elements. On another front, we extend Matsuno's method to show a Kida formula holds for the $p$-adic $L$-function of elliptic curves with additive reduction defined by Delbourgo in \cite{del-compositio}. The algebraic analogue of this was proved in \cite{RSadditivekida}. 

\par We expect there to be applications of our Kida formula to obtain formulae for the growth of Tate--Shafarevich groups in $p$-extensions of abelian number fields.
\subsection{Organization} Including the introduction, the article consists of four sections. In section \ref{s 2} we discuss preliminary notions and basic properties of Mazur--Tate elements, their relation to L-values and their Iwasawa invariants. In section \ref{s 3}, we prove the main results of this article, establishing an analogue of Kida's formula for elliptic curves over $\Q$ and more generally for Hecke eigencuspforms. Section \ref{s 4} is devoted to various applications of Kida's formula for Mazur--Tate elements.
\subsection{Acknowledgement} 
This research was initiated at the International Centre for Theoretical Sciences during the program "Automorphic Forms and the Bloch–Kato Conjecture" (ICTS/afbk2025/05) held from 19th to 30th May 2025 in Bangalore, India. The authors thank the institute for the stimulating environment conducive to research.
The authors would like to thank Robert Pollack for interesting conversations related to this paper. We thank the anonymous referees for their referee reports, which led to significant improvement of the manuscript from its earlier version.
\subsection*{Conflict of interest} The authors have no conflict of interest to report.
\subsection*{Data Availability} There is no data associated to the results of this manuscript.
\section{Mazur--Tate elements}\label{s 2}
\subsection{Basic notions}
\par Let us introduce Mazur--Tate elements associated to Hecke eigencuspforms, and their basic properties. Let $k\geq 2$ be an integer and $g:=k-2$. Given a commutative ring $R$ with $1$, set $V_g(R):=\op{Sym}^g(R^2)$, which we regard as the space of homogeneous polynomials of degree $g$ in two variables $X,Y$ with coefficients in $R$. For $\gamma=\smallmtx{a}{b}{c}{d}\in \op{GL}_2(R)$, set $\gamma^*:=\smallmtx{d}{-b}{-c}{a}$. The $R$-module $V_g(R)$ is endowed with a natural right action of $\op{GL}_2(R)$ defined by
\[(P\mid \gamma)(X,Y)\;=\;P\bigl((X,Y)\gamma^*\bigr)\;=\;P(dX-cY,-bX+aY),\]
\noindent for $P\in V_g(R)$ and $\gamma=\smallmtx{a}{b}{c}{d}\in \op{GL}_2(R)$. Note that when $k=2$, $V_0(R)=R$ with trivial action of $\op{GL}_2(R)$. Let $\Gamma\subseteq \op{SL}_2(\Z)$ be a congruence subgroup.
\begin{definition}[Modular symbols]
    By a modular symbol of weight $k$ over the ring $R$, we mean an element in the compactly supported cohomology group $H^1_c(\Gamma, V_g(R))$, where $g:=k-2$ and $V_g(R)$ has the discrete topology.
\end{definition}
There is a canonical Hecke-equivariant isomorphism:
\begin{equation}\label{basic mod symbol iso}
H_c^1(\Gamma,V_g(R)) \;\;\cong\;\; \Hom_\Gamma\!\left(\op{Div}^0(\mathbb{P}^1(\Q)),V_g(R)\right),
\end{equation}
(cf.~\cite[Proposition 4.2]{AshStevens}) where $\op{Div}^0(\mathbb{P}^1(\Q))$ denotes the group of degree-zero divisors on the cusps $\mathbb{P}^1(\Q)$. The right-hand side consists of additive maps $\varphi:\op{Div}^0(\mathbb{P}^1(\Q))\to V_g(R)$ satisfying the equivariance condition \begin{equation}\label{equivariance condition}
    \varphi(\gamma D)\mid \gamma = \varphi(D)
\end{equation}\noindent for all $\gamma\in \Gamma$. In what follows, we identify these spaces via the above isomorphism \eqref{basic mod symbol iso} and refer to them interchangeably as spaces of \emph{modular symbols}.

\par Associated with a modular symbol is a sequence of group-ring elements known as the \emph{Mazur--Tate elements}.  
Fix an integer $M$ coprime to $p$, and define
\[
\Delta_M := \operatorname{Gal}(\mathbb{Q}(\mu_M)/\mathbb{Q}) 
\quad \text{and}\quad
\Delta := \operatorname{Gal}(\mathbb{Q}(\mu_p)/\mathbb{Q}).
\]
For each integer $n \ge 0$, set $\mathscr{G}_n^M := \operatorname{Gal}\!\bigl(\mathbb{Q}(\mu_{p^n M})/\mathbb{Q}\bigr)$.
This group decomposes as
\[
\mathscr{G}_{n+1}^M \;\cong\; G_n \times \Delta \times \Delta_M,
\]
where $G_n := \operatorname{Gal}\!\bigl(\mathbb{Q}(\mu_{p^{n+1}})/\mathbb{Q}(\mu_p)\bigr)$. For brevity, write $\mathscr{G}_n := \mathscr{G}_n^1$ when $M=1$. Let 
\[\phi_n: \mathscr{G}_{n+1}\twoheadrightarrow G_n\]
denote the projection onto the first component of the above decomposition. The cyclotomic character induces canonical isomorphisms
\[
\chi_n^M : \mathscr{G}_n^M \xrightarrow{\;\sim\;} (\mathbb{Z}/p^n M\mathbb{Z})^\times,
\qquad
\text{and set } \chi_n := \chi_n^1.
\]
Let $\omega: \mathscr{G}_n \rightarrow  \mathbb{Z}_p^\times$ denote the Teichmüller character.
\noindent Set $\mathcal{G}:=\op{Gal}(\Q(\mu_{p^\infty M})/\Q)$ and $\Gamma:=\op{Gal}(\Q(\mu_{p^\infty})/\Q(\mu_p))$. Observe that there is a natural decomposition 
\[\mathcal{G}=\Gamma\times \Delta \times \Delta_M.\] Fix a discrete valuation ring $\cO$ containing $\Z_p$ and let $\mathfrak{G}$ be a profinite group. Then the Iwasawa algebra of $\mathfrak{G}$ over $\cO$ is defined as the completed group algebra 
\[\Lambda(\mathfrak{G}):=\varprojlim_{N} \cO[\mathfrak{G}/N],\] where $N$ ranges over all finite index normal subgroups of $\mathfrak{G}$. For ease of notation, we shall set $\Lambda:=\Lambda(\Gamma)$.
\par We fix a topological generator $\gamma\in \Gamma$ and let $\gamma_n$ be the image of $\gamma$ in $G_n$ with respect to the natural quotient $\Gamma\twoheadrightarrow G_n$. Setting $T:=(\gamma-1)$, one identifies $\Lambda$ with the formal power series ring $\cO\llbracket T\rrbracket$. Set $\Lambda_n:=\cO[G_n]$ and note that 
\[\Lambda_n\simeq \frac{\cO[T]}{\left((1+T)^{p^n}-1\right)}.
\]
For $a\in (\Z/p^{n+1}M\Z)^\times$, define $t_n(a)\in (\Z/p^n\Z,+)$ such that $\chi_n^M(\gamma_n)^{t_n(a)}\equiv \langle a \rangle \pmod{p^{n+1}}$, where $\langle a\rangle$ is the projection of $a$ onto the second factor in the decomposition:
\[(\Z/p^{n+1}M\Z)^\times \simeq (\Z/pM\Z)^\times\times (\Z/p^n\Z,+).\] Note that since the elements $\gamma_n$ all arise from $\gamma$, it follows that $t_m(a)\equiv t_n(a)\pmod{p^m}$ whenever $m\leq n$. Throughout, we fix an embedding 
\(\iota:\overline{\Q}\hookrightarrow \overline{\Q}_p\).
Let \[\psi:\mathscr{G}^M_{n+1}\rightarrow \bar{\Q}^\times\] be a character whose image lies in $\cO^\times$. We view $\psi$ as a Dirichlet character of modulus $p^{n+1}M$ under our identification $\mathscr{G}^M_{n+1}\cong (\Z/p^{n+1}M\Z)^\times$. Via the natural projection 
\[
\mathscr{G}_n^M \twoheadrightarrow \mathscr{G}_n,
\]
we regard $\omega$ and $\phi_n$ as group homomorphisms on $\mathscr{G}_n^M$. We get a group homomorphism
\[
\psi\phi_n:\mathscr{G}_{n+1}^M\longrightarrow \cO[G_n]^\times
\quad \text{defined by}\quad \sigma\longmapsto \psi(\sigma)\phi_n(\sigma).
\]
Extending \(\cO\)-linearly, we obtain a homomorphism
\[
\psi\phi_n:\cO[\mathscr{G}_{n+1}^{M}]
\longrightarrow
\cO[G_n]
=
\Lambda_n
\simeq
\frac{\cO[T]}{\bigl((1+T)^{p^n}-1\bigr)},
\]
provided that \(\cO\) contains the values of \(\psi\). We use this map to define Mazur--Tate elements twisted by $\psi$ as polynomials in $T$. Let $\{r\}$ denote the divisor class of the cusp $r\in \mathbb{P}^1(\mathbb{Q})$, so that $\{\infty\}-\{r\}$ is a degree-zero divisor. Given a Dirichlet character $\psi$, let $\Z_p[\psi]$ be the extension of $\Z_p$ generated by the values of $\psi$.
\begin{definition}[Mazur--Tate elements]\label{defn: mt elements}
Let $p>2$ be prime and let $M\geq1$ be an integer such that $p\nmid M$. The classical Mazur--Tate element of level $n$ associated with $\varphi \in H^1_c(\Gamma, V_g(R))$ is defined as
\[\Theta_n^M(\varphi) := \sum_{a \in (\Z/p^{n+1}M\Z)^\times} \varphi\left(\{\infty\}-\left\{\frac{a}{p^{n+1}M}\right\}\right)|_{(X,Y)=(0,1)}\cdot \sigma_a \in \cO[\mathscr{G}_{n+1}^M],\]
where $\chi_{n+1}^M(\sigma_a)=a$ (recall $\chi_{n+1}^M$ is the cyclotomic character used to identify $\mathscr{G}_{n+1}^M \cong (\Z/p^{n+1}M\Z)^\times$).
\par Let $\psi$ be a Dirichlet character of conductor $M'p^{m}$ for $m\leq n$ and $M'\mid M$. Assume that $\cO$ contains $\Z_p[\psi]$. Define the Mazur--Tate element of level $n$ associated with $\varphi \in H^1_c(\Gamma, V_g(R))$ twisted by $\psi$ as
\begin{align*}
    \Theta_n^M(\varphi,\psi,T) :=& \psi\phi_n(\Theta_n^M(\varphi))\\
    =&\sum_{a \in (\Z/p^{n+1}M\Z)^\times} \varphi\left(\{\infty\}-\left\{\frac{a}{p^{n+1}M}\right\}\right)|_{(X,Y)=(0,1)}\cdot \psi(a)\cdot(1+T)^{t_n(a)}\in \Lambda_n.
\end{align*}
   When $M=1$ and $\psi$ is the trivial character, we simply denote the Mazur--Tate element by $\Theta_n(\varphi,T)$.
\end{definition}

\par It is worth noting that Mazur--Tate elements are often defined as elements of the group ring $\Lambda_n$ for characters $\psi$ of modulus prime to $p$ (cf., for instance, \cite{pollack05} or \cite[p.~357]{PW}). On the other hand, we allow twists by characters $\psi$ having conductor divisible by powers of $p$.
Indeed, we shall show that this modified Mazur--Tate element satisfies interpolation properties similar to the usual Mazur--Tate elements (see Proposition \ref{prop: interpolation property}).
\par Let $K$ be the field of fractions of $\cO$. Associate Iwasawa invariants $\mu(G)$ and $\lambda(G)$ to a power series $G=\sum_j b_j T^j\in \Lambda$ as follows. By the Weierstrass preparation theorem, $G(T)=\pi^{m} P(T)u(T)$ where $P(T)$ is a \emph{distinguished polynomial} and $u(T)$ is a unit in $\Lambda$. This means that $P(T)$ is a monic polynomial whose non-leading coefficients are divisible by $\pi$ and $u(0)\in \cO^\times$. Given such a decomposition, 
\[\mu(G):=\frac{m}{e_{K/\Q}}\quad \text{and}\quad \lambda(G):=\op{deg}P(T),\] where $e_{K/\Q}$ is the ramification index of $K$ over $\Q$. On the other hand, the $\mu$ and $\lambda$-invariants of $F=\sum_{i=0}^{p^n-1}a_iT^i \in \Lambda_n$ are defined as 
\begin{equation}\label{finite mu and lambda}\begin{split}
    \mu(F) &= \underset{i}{\min}\{\ord_{p}(a_i)\},\\
    \lambda(F) &= \min\{ i : \ord_{p}(a_i) = \mu(F)\}
\end{split}
\end{equation}
where $\ord_{p}$ is the $p$-adic valuation such that $\ord_p(p)=1$. If $F=0$, set $\mu(F),\lambda(F):=\infty$. We note that $\op{ord}_p(\cdot)$ has values in $\Q$ when $\cO$ is a ramified extension of $\Z_p$, and hence so does $\mu(F)$. Note however that $\lambda(F)$ is always an integer and is independent of the choice of $\cO$. 
\begin{remark}One may define $\mu$ and $\lambda$-invariants for an element in $\Lambda_n$ intrinsically, purely in terms of the augmentation ideal (see \cite[\S~3.1]{PW}). Thus, the Iwasawa invariants are independent of the choice of the generator $\gamma_n$.
\end{remark}
\subsection{Modular symbols attached to eigenforms}
\par Next, we discuss the definition and properties of symbols $\varphi$ which arise from a normalized Hecke eigencuspform $f=\sum a_{n}(f)q^n\in S_k(\Gamma_0(N),\mathbb{C})$ for some integers $k\geq 2$ and $N\geq 1$. Denote by $K_f$ the Hecke field of $f$, i.e., the number field generated by the Fourier coefficients $\{a_n(f)\mid n\geq 1\}$. Let $\xi_f$ be the modular symbol defined by:
$$
\xi_f(\{r\}-\{s\})\;=\; 2\pi i\int_s^r f(z)(zX+Y)^{k-2}\,dz,\quad \text{where}\quad r,s\in \mathbb{P}^1(\Q).
$$
\noindent The action of the Hecke operators on $H^1_c(\Gamma_0(N),V_{k-2}(\mathbb{C}))$ is given by
    \[T_\ell:= \epsilon_N(\ell)\begin{psmallmatrix}
        \ell & 0 \\ 0 & 1
    \end{psmallmatrix} + \sum_{j=0}^{\ell-1} \smallmtx{1}{j}{0}{\ell},\]
    where $\epsilon_N$ denotes the character on $\Z$ defined as
   \[\epsilon_N(a)=\begin{cases}
       1 \text{ if } (a,N)=1\\
       0 \text{ if } (a,N)\neq 1.
   \end{cases}\]
It is a well known fact that the symbol $\xi_f$ is a Hecke eigensymbol with the same eigenvalues as $f$, i.e., $T_\ell \varphi_f= \varphi_{T_\ell f}$ (see \cite[l.5 of \S~2.2]{PW}).
\par The involution $\iota=\begin{psmallmatrix}
        -1 & 0 \\ 0 & 1
    \end{psmallmatrix}$ acts on the space of modular symbols, and $\xi_f$ decomposes uniquely as $\xi_f^++\xi_f^-$ with $\xi_f^\pm$ in the $\pm 1$-eigenspaces. By a theorem of Shimura \cite{shimura} (also see \cite[Proposition 5.11]{PacsolPopa}), there exist complex numbers $\Omega_f^\pm$ (the $\pm$-periods of $f$) such that $\xi_f^\pm$ takes values in $V_{k-2}(K_f)\Omega_f^\pm$, where $K_f$ is the Hecke field of $f$. After fixing an embedding $\overline{\Q}\hookrightarrow \overline{\Q}_p$, normalize $\xi_f^\pm$ by setting $\varphi_f^\pm:=\xi_f^\pm/\Omega_f^\pm$, which then takes values in $V_{k-2}(\overline{\Q}_p)$. Write $\varphi_f:=\varphi_f^++\varphi_f^-$, which depends on the chosen periods. Given $r \in \mathbb{P}^1(\Q)$, set 
\[\varphi_f(r):=\varphi_f(\{\infty\}-\{r\})\] for ease of notation.
\par Let $\mathcal{O}_f$ denote the ring of integers of the $p$-adic completion of $K_f$. It is convenient to choose $\Omega_f^\pm$ so that $\varphi_f^\pm$ has coefficients integral at $p$. More precisely, $\Omega_f^\pm$ are called \emph{cohomological periods} for $f$ if $\varphi_f^\pm$ takes values in $V_{k-2}(\mathcal{O}_f)$, and there exists $r, s\in \mathbb{P}^1(\Q)$ such that $\varphi_f^\pm(\{r\}-\{s\})$ has at least one coefficient which is a $p$-adic unit. Such periods always exist and are unique up to multiplication by a $p$-adic unit in $K_f$. Fixing $\psi$, we choose a valuation ring $\cO$ containing $\cO_{f}(\psi)$. In practice, $\cO$ will also be unramified over $\Z_p$. With this normalization, one defines the Mazur–Tate elements attached to $f$ as follows:
\[\Theta_n^M(f,\psi,T)\;=\;\Theta_n^M(\varphi_f,\psi,T) \quad \text{and}\quad \Theta_n^M(f)\;=\;\Theta_n^M(\varphi_f).\] 
\par We recall some fundamental properties of modular symbols. Since $\varphi_f$ is a Hecke eigensymbol, 
\begin{equation}\label{eqn: Hecke relation for modsymbs}
a_\ell\varphi_f(r)=\epsilon_N(\ell)\ell^{k-2}\varphi_f(\ell r)+\sum_{j=0}^{\ell-1}\varphi_f((r+j)/\ell),
\end{equation}
\noindent for all $r \in \mathbb{P}^1(\Q)$.

For integers $n > m\geq 0$, there are natural projection maps
\[
\pi_{n,m}^{M} : \mathcal{O}[\mathscr{G}_{n}^{M}] \longrightarrow \mathcal{O}[\mathscr{G}_{m}^{M}],
\]
arising from the canonical surjection 
\[
\mathscr{G}_{n}^{M} = \operatorname{Gal}\!\bigl(\mathbb{Q}(\mu_{p^{n}M})/\mathbb{Q}\bigr)
\twoheadrightarrow 
\operatorname{Gal}\!\bigl(\mathbb{Q}(\mu_{p^{m}M})/\mathbb{Q}\bigr) = \mathscr{G}_{m}^{M}.
\]
Similarly, for integers $M \mid M'$, we define projection maps
\[
\pi_{n}^{M',M} : \mathcal{O}[\mathscr{G}_{n}^{M'}] \longrightarrow \mathcal{O}[\mathscr{G}_{n}^{M}],
\]
induced by the natural quotient
\[
\mathscr{G}_{n}^{M'} = \operatorname{Gal}\!\bigl(\mathbb{Q}(\mu_{p^{n}M'})/\mathbb{Q}\bigr)
\twoheadrightarrow 
\operatorname{Gal}\!\bigl(\mathbb{Q}(\mu_{p^{n}M})/\mathbb{Q}\bigr) = \mathscr{G}_{n}^{M}.
\]
Under the cyclotomic identifications
\[
\chi_{n}^{M} : \mathscr{G}_{n}^{M} \xrightarrow{\;\sim\;} (\mathbb{Z}/p^{n}M\mathbb{Z})^{\times},
\]
the maps $\pi_{n,m}^{M}$ and $\pi_{n}^{M',M}$ correspond, respectively, to the natural reduction maps
\[
(\mathbb{Z}/p^{n}M\mathbb{Z})^{\times} \longrightarrow (\mathbb{Z}/p^{m}M\mathbb{Z})^{\times}
\qquad\text{and}\qquad
(\mathbb{Z}/p^{n}M'\mathbb{Z})^{\times} \longrightarrow (\mathbb{Z}/p^{n}M\mathbb{Z})^{\times}.
\]

We also define the \emph{trace map}
\[
\nu_{n-1,n}^{M} : \mathcal{O}[\mathscr{G}_{n-1}^{M}] \longrightarrow \mathcal{O}[\mathscr{G}_{n}^{M}]
\]
by
\[
\nu_{n-1,n}^{M}(\tau) \;=\; \sum_{\pi(\sigma) = \tau} \sigma, \qquad \tau \in \mathscr{G}_{n-1}^{M},
\]
where $\pi : \mathscr{G}_{n}^{M} \twoheadrightarrow \mathscr{G}_{n-1}^{M}$ denotes the natural projection.
Under the cyclotomic identification
\[
\mathscr{G}_{n}^{M} \;\xrightarrow{\;\chi_{n}^{M}\;}\; (\mathbb{Z}/p^{n}M\mathbb{Z})^{\times},
\]
the map $\nu_{n-1,n}^{M}$ corresponds to the operation $h(T) \mapsto \tilde{h}(T)\,\omega_{n}(T)$, where $\widetilde{h}(T)$ is lift of $h(T)$ with respect to the map $\pi_{n,n-1}^{M}$ and $\omega_{n}(T):=\sum_{j=0}^{p-1} (1+T)^{j p^{n-1}}$. The following result shows that Mazur--Tate elements satisfy certain compatibility relations with respect to  projection maps defined above. In fact, these relations are derived from \eqref{eqn: Hecke relation for modsymbs}.
\begin{lemma}[Compatibility with $\pi^{M\ell,M}_n$]\label{lem: pi n compatibility}
   Let $\ell$ be a prime such that $\ell\nmid Mp$ and $\psi$ be a character of conductor $p^mM$ for $m\leq n$. Then, we have that
   \[\pi^{M\ell,M}_n(\Theta^{M\ell}_n(f))=(a_\ell(f)-\sigma_\ell-\epsilon_N(\ell)\ell^{k-2}\sigma_\ell)\Theta^{M}_n({f}).\]
   Thus, we have
    \[\Theta^{M\ell}_n(f, \psi,T)= h_\ell(\psi,T)\Theta^{M}_n({f},\psi,T),\] with 
    \[h_\ell(\psi, T):=a_\ell- \psi(\ell)(1+T)^{
t_n(\ell)}- \epsilon_N(\ell)\ell^{k-2}\psi(\ell)^{-1}(1+T)^{-t_n(\ell)}.\]
\end{lemma}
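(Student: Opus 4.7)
The plan is to derive the first identity directly from the Hecke eigenvalue relation \eqref{eqn: Hecke relation for modsymbs}, and then deduce the twisted identity by applying $\psi\phi_n$ to both sides. The latter step is essentially automatic: since $\psi$ has conductor dividing $p^{n+1}M$, the composite $\psi\phi_n$ on $\cO[\mathscr{G}^{M\ell}_{n+1}]$ factors through the projection $\pi^{M\ell,M}_n$, and it sends $\sigma_\ell$ to $\psi(\ell)(1+T)^{t_n(\ell)}$ and $\sigma_\ell^{-1}$ to $\psi(\ell)^{-1}(1+T)^{-t_n(\ell)}$, which is precisely the combination appearing in $h_\ell(\psi,T)$.

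For the underlying group-ring identity I would specialize the Hecke relation to $r = b/(p^{n+1}M)$ for each $b \in (\Z/p^{n+1}M\Z)^\times$, evaluate at $(X,Y)=(0,1)$, multiply by $\sigma_b$, and sum over $b$. The combinatorial engine is a Chinese Remainder Theorem analysis of the fractions
\[
\frac{r+j}{\ell} \;=\; \frac{b+jp^{n+1}M}{p^{n+1}M\ell},\qquad j=0,1,\ldots,\ell-1.
\]
For $\ell-1$ of these indices the numerator is coprime to $\ell$, and as the pair $(b,j)$ varies the resulting numerators bijectively enumerate $(\Z/p^{n+1}M\ell\Z)^{\times}$; under $\pi^{M\ell,M}_n$ the element $\sigma_a$ attached to such an $a$ becomes $\sigma_b$, so these terms assemble into $\pi^{M\ell,M}_n(\Theta_n^{M\ell}(f))$. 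For the single remaining index $j_0(b)$ one has $\ell \mid b + j_0(b)p^{n+1}M$, and the fraction reduces to $c(b)/(p^{n+1}M)$ with $c(b) \equiv \ell^{-1}b \pmod{p^{n+1}M}$; after the substitution $b = \ell b'$ this contribution becomes $\sigma_\ell \, \Theta_n^M(f)$.

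On the right-hand side of the Hecke relation, $\sum_b \sigma_b\, \varphi_f(b/p^{n+1}M)|_{(X,Y)=(0,1)}$ is exactly $\Theta_n^M(f)$, while the substitution $b'' = \ell b$ converts $\sum_b \sigma_b\, \varphi_f(\ell b/p^{n+1}M)|_{(X,Y)=(0,1)}$ into $\sigma_\ell^{-1}\,\Theta_n^M(f)$. Equating both sides yields
\[
\pi^{M\ell,M}_n\bigl(\Theta_n^{M\ell}(f)\bigr) + \sigma_\ell\,\Theta_n^M(f) \;=\; \bigl(a_\ell - \epsilon_N(\ell)\ell^{k-2}\sigma_\ell^{-1}\bigr)\,\Theta_n^M(f),
\]
which rearranges to the displayed formula of the lemma. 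Applying $\psi\phi_n$ then produces $h_\ell(\psi,T)\cdot\Theta_n^M(f,\psi,T)$ as claimed.

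The only point that requires real care is the bookkeeping of the CRT parametrization and, crucially, the direction of each change of variables $b \mapsto \ell b'$ versus $b \mapsto \ell^{-1}b''$, since these determine whether the factor produced is $\sigma_\ell$ or $\sigma_\ell^{-1}$. Beyond this there is no substantial obstacle: the content of the lemma is effectively a repackaging of the Hecke eigenvalue identity \eqref{eqn: Hecke relation for modsymbs}.
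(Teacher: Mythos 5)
Your proposal is correct and takes essentially the same route as the paper: both derive the group-ring identity by specializing the Hecke relation \eqref{eqn: Hecke relation for modsymbs} at cusps with denominator $p^{n+1}M$ and summing against group elements (the paper simply specializes at $r=a\ell/p^{n+1}M$ so that the exceptional index is uniformly $j=0$, which is your parametrization after the change of variables $b=a\ell$), and both then obtain the twisted identity by applying $\psi\phi_n$ and observing that it factors through $\pi^{M\ell,M}_n$ because $\psi$ has conductor $p^mM$. One point in your favor: your Euler term $\epsilon_N(\ell)\ell^{k-2}\sigma_\ell^{-1}$ is the correct one --- it is exactly what maps to $\epsilon_N(\ell)\ell^{k-2}\psi(\ell)^{-1}(1+T)^{-t_n(\ell)}$ in $h_\ell(\psi,T)$ --- whereas the lemma's first display (and the last line of the paper's own computation) writes $\sigma_\ell$ in that term, which is a typo, as one checks from the substitution $u=a\ell^2$ giving $\sigma_{a\ell}=\sigma_u\sigma_\ell^{-1}$.
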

\begin{proof}
    From the Hecke equivariance relation \eqref{eqn: Hecke relation for modsymbs}, it follows that
    \begin{align*}
        a_\ell \varphi_{f}\left(\frac{a\ell}{p^{n+1}M}\right) 
       &=\epsilon_N(\ell)\ell^{k-2}\varphi_f\left(\frac{a\ell^2}{p^{n+1}M}\right)+ \varphi_f\left(\frac{a}{p^{n+1}M}\right)+\sum_{j=1}^{\ell-1} \varphi_f\left(\frac{a\ell+jp^{n+1}M}{p^{n+1}M\ell}\right).
    \end{align*}
Note that 
\[\psi(a\ell+jp^{n+1}M)=\psi(a\ell) \quad \text{and}\quad t_n(a\ell+jp^{n+1}M)=t_n(a\ell).\]Thus we deduce that
\begin{align*}
   \pi^{M\ell,M}_{n}(\Theta^{M\ell}_n (f)) &= \sum_{\substack{a=0 \\ (a, pM)=1}}^{(p^{n+1}M-1)} \left[\sum_{j=1}^{\ell-1} 
\varphi_f\left(\frac{a\ell+jp^{n+1}M}{p^{n+1}M\ell}\right)|_{(X,Y)=(0,1)}\right]\ \sigma_{a\ell} \\
&= a_\ell\cdot\sum_{\substack{a=0 \\ (a, pM)=1}}^{(p^{n+1}M-1)} \varphi_f(a\ell/p^{n+1}M)|_{(X,Y)=(0,1)} \cdot \sigma_{a\ell}\\ &  - \epsilon_N(\ell)\ell^{k-2}\sum_{\substack{a=0\\ (a, pM)=1}}^{(p^{n+1}M-1)} \varphi_f\left(\frac{a\ell^2}{p^{n+1}M}\right)|_{(X,Y)=(0,1)}\cdot\sigma_{a\ell} -\sum_{\substack{a=0 \\ (a, pM)=1}}^{(p^{n+1}M-1)} \varphi_f\left(\frac{a}{p^{n+1}M}\right)|_{(X,Y)=(0,1)}\cdot\sigma_{a\ell} \\
&=(a_\ell(f)-\sigma_\ell-\epsilon_N(\ell)\ell^{k-2}\sigma_\ell)\Theta^{M}_n({f})
\end{align*}
\noindent proving the result. Apply $\psi\phi_n$ on both sides, to get 
\[\psi\phi_n(\pi^{M\ell,M}_{n}(\Theta_{n}^{M\ell}(f)))= h_\ell(\psi,T)\Theta_{n}^{M}(f,\psi,T).\]
Finally, note that since $\psi$ has conductor $p^mM$, we have $\psi(a)=\psi(a \pmod{p^mM})$ for $a\in (\Z/p^{n+1}M\ell\Z)$, this implies 
\[\psi\phi_n\left(\pi^{M\ell,M}_{n}(\Theta_{n}^{M\ell}(f))\right)= \psi\phi_n(\Theta_{n}^{M\ell}(f))=\Theta_{n}^{M\ell}(f,\psi,T).\]
\end{proof}

    The polynomial $h_\ell(\psi,T)$ can be understood as an Iwasawa theoretic Euler factor, i.e., an element of $\Lambda_n$ that interpolates the Euler factor at $\ell$. In the proof of Lemma \ref{lem: pi n compatibility}, the key input was that $\varphi_f$ is an eigensymbol for the Hecke operator $T_\ell$ for $\ell\neq p$. Similarly, we can use that $\varphi_f$ is an eigensymbol for the Hecke operator $T_p$ to get the following compatibility relation:

\[\pi^{M}_{n,n-1}(\Theta^{M}_n(f, \psi,T))= a_p(f)\Theta^{M}_n({f},\psi,T)- \epsilon_N(p)p^{k-2}\nu_{n-1,n}(\Theta^M_{n-1}(f,\psi,T)).\]
For a proof, see \cite[\S~4, (4.2)]{MTT}. Now, define the sequence $c_{n-i+1}$ recursively, as follows:
$c_0=0, c_1=1$ and $c_m=a_pp^{-1}c_{m-1}-\epsilon_N(p)p^{k-3}c_{m-2}$. 
Then, one can show by induction, that for integers $n>i\geq 0$, the following holds:
\[\pi^{M}_{n,i}(\Theta^{M}_n(f, \psi,T)))= p^{n-i}c_{n-i+1}\Theta^{M}_i({f},\psi,T)- \epsilon_N(p)p^{k-2} p^{n-i-1}c_{n-i}\nu_{i-1,i}(\Theta^M_{i-1}(f,\psi,T))\]
(see \cite[\S~1]{kuri2002} for further details). It follows that
\begin{equation}\label{i is smaller than n}\Theta^{M}_n(f, \psi,\zeta_{p^i}-1)=\pi^{M}_{n,i}\left(\Theta^{M}_n(f, \psi,\zeta_{p^i}-1)\right)=p^{n-i}c_{n-i+1}\Theta^{M}_i({f},\psi,\zeta_{p^i}-1)\end{equation}
where $\zeta_{p^i}$ denotes a primitive $p^i$th root of unity. The second equality holds because $\nu_{i-1,i}(\Theta_{i-1}^M(f,\psi,\zeta_{p^i}-1))= 0$ from the definition of the map $\nu_{i-1,i}$. Note that evaluating $\Theta_{n}^M(f,\psi,T)$ at $T=\zeta_{p^i}-1$ corresponds to the value (at $s=1$) of the $L$ function of $f$ twisted by a character of conductor $p^{i+1}$ and order $p^i$ (i.e. a primitive character on $G_i\cong\op{Gal}(\Q_{(i)}/\Q)$) via the interpolation property, which we discuss below in Proposition \ref{prop: interpolation property}.
\subsection{Interpolating $L$-values: Birch's lemma}
We recall some basic computations from \cite[\S8]{MTT} relating Mazur--Tate elements to special values of $L$-functions. 
Let $\chi$ be a Dirichlet character of conductor $m$, and adopt the shorthand $e(z):=e^{2\pi i z}$. The associated Gauss sum is defined by
\[
\tau(n,\chi) := \sum_{a \bmod m} \chi(a)\, e(na/m)
\]
and set $\tau(\chi) := \tau(1,\chi)$. Then $\tau(n,\chi) = \overline{\chi}(n)\, \tau(\chi)$ for all $n \in \mathbb{Z}$.

Let $N\geq 1$ be an integer and let $f=\sum_{n\geq1}a_n(f)e(nz)$ be a Hecke eigencuspform in $S_2(\Gamma_0(N))$. The $L$-function associated with $f$ is computed via the Mellin transform
\begin{equation}\label{mellin trans eqn}
    L(f,s)=\sum_{n\geq 1} a_n(f) n^{-s}=\frac{2\pi^s}{\Gamma(s)}\int_0^\infty f(it)t^{s-1}dt.
\end{equation}

\noindent The $\chi$-twist of $f$ is defined as follows:
\[f_{\chi}(z):=\sum_na_n(f)\chi(n)e(n z).\] 
\begin{lemma}[Birch's lemma]\label{birch lemma}
  Let $\chi$ be a Dirichlet character of conductor $m$. Then
      \[\begin{split} f_{\overline\chi}(z)=& \frac{1}{\tau(\chi)}\sum_{a\op{mod} m}\chi(a) f\left(z+\frac{a}{m}\right).\end{split}\]
\end{lemma}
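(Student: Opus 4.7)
The proof is a direct Fourier-theoretic computation, using only the defining property of the Gauss sum $\tau(n,\chi)=\overline{\chi}(n)\tau(\chi)$ recorded just before the statement. The plan is to expand the right-hand side using the $q$-expansion of $f$, interchange the two finite/convergent sums, and recognise a Gauss sum in the inner sum.

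Concretely, I would start from the right-hand side and write
\[
\sum_{a\bmod m}\chi(a)\,f\!\left(z+\frac{a}{m}\right)
=\sum_{a\bmod m}\chi(a)\sum_{n\geq1}a_n(f)\,e\!\left(n z+\frac{na}{m}\right)
=\sum_{n\geq1}a_n(f)\,e(nz)\sum_{a\bmod m}\chi(a)\,e\!\left(\frac{na}{m}\right).
\]
The inner sum is by definition $\tau(n,\chi)$, and the identity $\tau(n,\chi)=\overline{\chi}(n)\tau(\chi)$ turns this into
\[
\tau(\chi)\sum_{n\geq1}a_n(f)\,\overline{\chi}(n)\,e(nz)=\tau(\chi)\,f_{\overline{\chi}}(z).
\]
Dividing by $\tau(\chi)$ (which is nonzero since $\chi$ is primitive of conductor $m$) yields the claimed identity.

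The only point that deserves care is the justification of the identity $\tau(n,\chi)=\overline{\chi}(n)\tau(\chi)$ when $\gcd(n,m)>1$: here one uses that $\chi$, being of conductor $m$, is primitive, so both sides vanish. Under this convention (which is the one implicit in the surrounding text), the computation above is valid as written. The interchange of summations is legal because the outer sum is finite and the inner $q$-expansion converges absolutely on the upper half-plane. There is no serious obstacle; the lemma is essentially a repackaging of the Gauss-sum identity.
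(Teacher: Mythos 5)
Your proof is correct and is essentially the paper's argument run in reverse: the paper starts from $f_{\overline\chi}(z)$, uses $\overline{\chi}(n)=\tau(n,\chi)/\tau(\chi)$ (valid for all $n$ by primitivity), unfolds the Gauss sum and interchanges summations to arrive at the right-hand side, whereas you expand the right-hand side and recognise the Gauss sum. Your explicit care about the case $\gcd(n,m)>1$, where primitivity forces both sides of $\tau(n,\chi)=\overline{\chi}(n)\tau(\chi)$ to vanish, is precisely the point the paper uses implicitly when it restricts its opening sum to $(n,m)=1$.
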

\begin{proof}
  Since $\chi$ is primitive, we have the relation $\overline{\chi}(n)=\tau(n,\chi)/\tau(\chi)$ for all $n$. Thus, we find that 
    \begin{align*}
        f_{\overline\chi}(z) =& \frac{1}{\tau(\chi)}\sum_{(n,m)=1}a_n\tau(n,\chi)e(nz)\\
        = & \frac{1}{\tau(\chi)}\sum_{a \op{mod}m}\chi(a)\sum_{n=1}^\infty a_n e\left(n\left(z+\frac{a}{m}\right)\right) \\
        =&\frac{1}{\tau(\chi)}\sum_{a \op{mod}m}\chi(a) f\left(z+\frac{a}{m}\right).\end{align*}
\end{proof}
\noindent Let $\chi_n: G_n\rightarrow \mu_{p^n}$ denote the character $\gamma_n \mapsto \zeta_{p^n}$. Since we have identified $G_n$ with the $p$-Sylow subgroup of $(\Z/p^{n+1}\Z)^\times$, $\chi_n$ gives rise to a Dirichlet character of conductor $p^{n+1}$. For a Dirichlet character \(\eta\), set\[
L(f,\eta,s)
:=
\sum_{r \geq 1} a_r(f)\eta(r)r^{-s}.
\]With this notation, Lemma \ref{birch lemma} gives the following interpolation formula.
\begin{proposition}\label{prop: interpolation property}
    Let $M\geq 1$ be an integer such that $(p,M)=1$. Then 
    \begin{itemize}
        \item[(a)] If $\psi$ is a Dirichlet character of conductor $Mp^m$ for $m\leq n$, then 
        \[\Theta^M_{n}(f, \psi, \zeta_{p^n}-1)= {\tau(\psi\chi_n)}\frac{L(f,\overline{\psi\chi_n},1)}{\Omega_f^{\psi(-1)}},\]
       \item[(b)]For $i<n$, we have
        \[\Theta^M_{n}(f, \psi, \zeta_{p^i}-1)= {p^{n-i}c_{n-i+1}}{\tau(\chi_i\psi)}\frac{L(f,\overline{\psi\chi_i},1)}{\Omega_f^{\psi(-1)}}\]
  where the $c_m$ are defined recursively by $c_0=0, c_1=1$ and $c_m=a_pp^{-1}c_{m-1}-\epsilon_N(p)p^{-1}c_{m-2}$ for $m\geq2$.
\end{itemize}
\end{proposition}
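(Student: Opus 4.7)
The plan is to deduce (a) by directly unravelling Definition \ref{defn: mt elements} and applying Birch's Lemma, and then to obtain (b) as an immediate consequence of the norm compatibility \eqref{i is smaller than n} recorded just above the statement.

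For part (a), I would substitute $T = \zeta_{p^n} - 1$ in the defining sum for $\Theta_n^M(f,\psi,T)$. Under the identification $\mathscr{G}_n^M \cong (\Z/p^{n+1}M\Z)^\times$ supplied by $\chi_n^M$, the factor $(1+T)^{t_n(a)} = \zeta_{p^n}^{t_n(a)}$ becomes precisely the value $\chi_n(a)$, where $\chi_n$ is regarded as a Dirichlet character of conductor $p^{n+1}$ extended to $\mathscr{G}_n^M$ through the projection onto $G_n$. Because $\chi_n$ has exact order $p^n$ while the $p$-Sylow part of $\psi$ has order dividing $p^{m-1}$ with $m\le n$, the product $\psi\chi_n$ is primitive of conductor $p^{n+1}M$. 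Decomposing $\varphi_f = \varphi_f^+ + \varphi_f^-$ and changing variables $a\mapsto -a$, together with the fact that $\chi_n(-1)=1$ (since $-1$ projects trivially to the $p$-Sylow $G_n$ for odd $p$), forces the summation to collapse onto the $\varphi_f^{\psi(-1)}$-component.

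Next I would write the surviving expression as
\[
\Theta_n^M(f,\psi,\zeta_{p^n}-1)\,\Omega_f^{\psi(-1)} \;=\; \sum_{a} (\psi\chi_n)(a)\int_{a/p^{n+1}M}^\infty f(z)\,dz,
\]
and rewrite each modular-symbol integral along a vertical path as $i\int_0^\infty f\bigl(it + a/(p^{n+1}M)\bigr)\,dt$. Birch's Lemma (Lemma \ref{birch lemma}) with character $\psi\chi_n$ then identifies the inner sum with $\tau(\psi\chi_n)\int_0^\infty f_{\overline{\psi\chi_n}}(it)\,dt$, and the Mellin transform formula \eqref{mellin trans eqn} at $s=1$ converts this into $\tau(\psi\chi_n)\,L(f,\overline{\psi\chi_n},1)$, with the factors of $i$ and $2\pi$ absorbed by convention into the cohomological period $\Omega_f^{\psi(-1)}$. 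For part (b), I would then apply the norm relation \eqref{i is smaller than n}, namely $\Theta_n^M(f,\psi,\zeta_{p^i}-1) = p^{n-i}c_{n-i+1}\,\Theta_i^M(f,\psi,\zeta_{p^i}-1)$, and substitute the formula from (a) with $n$ replaced by $i$.

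The main obstacle I expect is the careful bookkeeping in (a): verifying the primitivity and conductor of $\psi\chi_n$, and confirming that the $\iota$-eigenspace decomposition correctly isolates the $\varphi_f^{\psi(-1)}$-component with the correct sign after the change of variables $a \mapsto -a$. Once these symmetries are in place, the argument reduces to the standard Mellin-transform derivation of the interpolation property (as in \cite[\S 8]{MTT}) transported to the finite-level Mazur--Tate setting.
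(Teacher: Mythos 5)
Your proposal is correct and takes essentially the same approach as the paper: part (a) by combining Birch's lemma with the Mellin transform at $s=1$ and the $a\mapsto-a$ symmetry of the eigensymbols $\xi_f^{\pm}$ to isolate the $\varphi_f^{\psi(-1)}$-component (including the same implicit use of the primitivity of $\psi\chi_n$ of conductor $p^{n+1}M$ and the same absorption of constants into the periods), and part (b) as an immediate consequence of the relation \eqref{i is smaller than n}. The only difference is cosmetic: you unravel $\Theta_n^M(f,\psi,T)$ at $T=\zeta_{p^n}-1$ and then invoke Birch's lemma, whereas the paper runs the identical computation in the reverse direction, starting from Birch's lemma and recognizing the result as the Mazur--Tate element.
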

 \begin{remark}\label{rem:constant-term-MT}
Taking \(i=0\) in Proposition~\ref{prop: interpolation property} gives
\[
\Theta^M_n(f,\psi,0)
=
p^n c_{n+1}\tau(\psi)
\frac{L(f,\psi,1)}{\Omega_f^{\psi(-1)}}
\]
for a character \(\psi\) of conductor \(M\) coprime to \(p\). Thus, the constant term of
\(\Theta^M_n(f,\psi,T)\) can be used to recover \(L(f,\psi,1)\), provided
\(c_{n+1}\neq 0\). Also, note that if
\[
\lambda\bigl(\Theta^M_n(f,\psi,T)\bigr)=0,
\]
then \(L(f,\psi,1)\neq 0\).
\end{remark}
\begin{proof}[Proof of Proposition \ref{prop: interpolation property}]
\par It suffices first to prove part (a). Indeed, once part (a) is known, part (b) follows from \eqref{i is smaller than n}. Applying the Mellin transform as in \eqref{mellin trans eqn} to the statement of Lemma \ref{birch lemma}, we find that:
\begin{align*}\label{L(f, chi, 1) eqn}{\tau(\psi\chi_n)}L(f,\overline{\psi\chi_n},1) =&\sum_{a \op{mod}{p^{n+1}M}} \psi\chi_n(a) \int_{\frac{a}{p^{n+1}M}}^\infty f(z)dz\\ = &\sum_{a \op{mod}{p^{n+1}M}} \psi\chi_n(a) \xi_{f}\left(\{\infty\}-\left\{\frac{a}{p^{n+1}M}\right\}\right)|_{(X,Y)=(0,1)}\\
= &\sum_{a \op{mod}{p^{n+1}M}} \psi(a)\xi_{f}\left(\{\infty\}-\left\{\frac{a}{p^{n+1}M}\right\}\right)|_{(X,Y)=(0,1)}(\zeta_{p^n})^{t_n(a)}.\end{align*}
Here we have implicitly used the fact that $\psi\chi_n$ has conductor $p^{n+1}M$. 
\par Also note that $\xi_f^\pm(\{\infty\}-\{-r\})=\pm\xi_f^\pm(\{\infty\}-\{r\})$ since $\xi_f^\pm$ is an eigenvector for the involution $\iota$ with sign $\pm1$. It then follows that 
\begin{align*}
   &\sum_{a \op{mod}{p^{n+1}M}} \psi\chi_n(a) \xi_{f}^{-\psi(-1)}\left(\{\infty\}-\left\{\frac{a}{p^{n+1}M}\right\}\right)|_{(X,Y)=(0,1)}\\=&\sum_{a \op{mod}{p^{n+1}M}} \psi\chi_n(-a) \xi_{f}^{-\psi(-1)}\left(\{\infty\}-\left\{-\frac{a}{p^{n+1}M}\right\}\right)|_{(X,Y)=(0,1)}\\
   =& -\psi(-1)^2\sum_{a \op{mod}{p^{n+1}M}} \psi\chi_n(a) \xi_{f}^{-\psi(-1)}\left(\{\infty\}-\left\{\frac{a}{p^{n+1}M}\right\}\right)|_{(X,Y)=(0,1)}\\
   =& -\sum_{a \op{mod}{p^{n+1}M}} \psi\chi_n(a) \xi_{f}^{-\psi(-1)}\left(\{\infty\}-\left\{\frac{a}{p^{n+1}M}\right\}\right)|_{(X,Y)=(0,1)}. 
\end{align*}
 Thus this sum equals $0$ and consequently,\[\sum_{a \op{mod}{p^{n+1}M}} \psi\chi_n(a) \xi_{f}\left(\{\infty\}-\left\{\frac{a}{p^{n+1}M}\right\}\right)|_{(X,Y)=(0,1)}= \sum_{a \op{mod}{p^{n+1}M}} \psi\chi_n(a) \xi_{f}^{\psi(-1)}\left(\{\infty\}-\left\{\frac{a}{p^{n+1}M}\right\}\right)|_{(X,Y)=(0,1)}\]
\noindent from which part (a) follows.
\end{proof}

\subsection{Mazur--Tate elements over abelian extensions of $\Q$}
For a character $\kappa$, let $M_{\kappa}$ denote the non-$p$ part of its conductor, i.e., the conductor of $\kappa$ is of the form $p^{n'}M_\kappa$ for some $n'\geq0$ with $(p,M_\kappa)=1$. For a finite abelian extension $K/\Q$, let $n_K$ denote the integer such that $K\cap \Q_\infty=\Q_{(n_K)}$.
Then, from the ramification of primes lying above $p$ in $K$, it can be seen that $K\subset \Q(\mu_{p^{n_K+1}M})$ with $(p,M)=1$. Thus, a character on $\op{Gal}(K/\Q)$, when viewed as a Dirichlet character, has conductor having $p$-adic valuation at most $n_K+1$.
\begin{lemma}\label{lemma about g and h}
Let $\zeta$ be a primitive $p^m$-th root of unity and define
\[
g(T) := \prod_{i=0}^{p^m-1} f(\zeta^i(1+T)-1)
\]
for some $f(T) \in \mathbb{Z}_p[T]$. Then there exists a unique polynomial $h(T) \in \mathbb{Z}_p[T]$ such that
\[
g(T) = h\bigl((1+T)^{p^m} - 1\bigr).
\]
\end{lemma}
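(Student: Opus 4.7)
The plan is to exploit the symmetry of the product under the substitution $T\mapsto \zeta(1+T)-1$, which corresponds to $U\mapsto \zeta U$ after the change of variables $U:=1+T$. Viewing $g$ as a polynomial in $U$, this substitution permutes the factors of $g$ cyclically (it sends $f(\zeta^i U-1)$ to $f(\zeta^{i+1}U-1)$), so $g$ is invariant under $U\mapsto \zeta U$. Hence in the expansion $g = \sum_k c_k U^k$ (with $c_k\in \Z_p[\zeta]$), one has $c_k=\zeta^k c_k$ for every $k$, forcing $c_k=0$ unless $p^m\mid k$. We may therefore write $g = \sum_j c_{p^m j}\, U^{p^m j}$, and setting $W:=(1+T)^{p^m}-1=U^{p^m}-1$ and $h(W):=\sum_j c_{p^m j}(1+W)^j$ gives the desired identity $g(T)=h((1+T)^{p^m}-1)$.

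It remains to verify that the $c_{p^m j}$ lie in $\Z_p$ (not merely in $\Z_p[\zeta]$). First, $g\in \Z_p[\zeta][T]$ is Galois-invariant over $\Z_p$: for any $\sigma\in \op{Gal}(\Q_p(\zeta)/\Q_p)$ sending $\zeta\mapsto \zeta^j$ with $(j,p)=1$, the map $i\mapsto ij$ is a bijection of $\Z/p^m\Z$, so $\sigma(g)=g$. Thus $g\in \Z_p[T]=\Z_p[U]$, and its coefficients in the variable $U$ all lie in $\Z_p$; in particular $c_{p^m j}\in \Z_p$, so $h(W)\in \Z_p[W]$.

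For uniqueness, if $h_1,h_2\in \Z_p[T]$ both satisfy the identity, then $(h_1-h_2)((1+T)^{p^m}-1)\equiv 0$ in $\Z_p[T]$. Since $(1+T)^{p^m}-1$ is a non-constant polynomial in $T$, substitution into it defines an injective $\Z_p$-algebra homomorphism $\Z_p[W]\hookrightarrow \Z_p[T]$, so $h_1=h_2$.

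The main subtlety is the descent from $\Z_p[\zeta]$ to $\Z_p$, but this follows cleanly from the Galois-invariance argument above; once that is in place, the rest is a direct change of variables.
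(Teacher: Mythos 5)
Your proof is correct, and it takes a genuinely different route from the paper's. The paper follows the argument of Washington's Lemma 13.39: it uses the symmetry $g(\zeta^i(1+T)-1)=g(T)$ only through its pointwise consequence $g(\zeta^j-1)=g(0)$, then divides repeatedly by $(1+T)^{p^m}-1$ to build $h$ coefficient by coefficient, and proves uniqueness by evaluating both candidates at $\zeta_{p^n}-1$ for all $n\geq m$, so that $h_1-h_2$ acquires infinitely many roots. You instead exploit the full symmetry at once: after the substitution $U=1+T$, invariance under $U\mapsto \zeta U$ forces $c_k=\zeta^k c_k$, which kills every coefficient with $p^m\nmid k$ in one stroke (valid since $\Z_p[\zeta]$ is a domain and $\zeta$ has exact order $p^m$), after which $h$ is written down in closed form via $U^{p^m}=1+W$; and your uniqueness argument, via injectivity of the substitution homomorphism $\Z_p[W]\hookrightarrow \Z_p[T]$ over a domain, is more elementary than the paper's evaluation at infinitely many roots of unity. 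A genuine merit of your write-up is the explicit Galois descent from $\Z_p[\zeta]$ to $\Z_p$: the paper's successive divisions a priori produce quotients with coefficients in $\Z_p[\zeta]$, and the asserted $\Z_p$-rationality of the $a_i$ is left tacit there, whereas your observation that $\sigma(g)=g$ for every $\sigma\in\op{Gal}(\Q_p(\zeta)/\Q_p)$ (because $i\mapsto ij$ permutes the factors) settles it cleanly. What the paper's inductive division buys in exchange is that the degree bookkeeping $\deg h=\deg f$ — which the subsequent remark uses to conclude $h\in\Lambda_n$ whenever $f\in\Lambda_n$ — is visible at each step; the same equality follows from your expansion as well, since the top surviving coefficient $c_{p^m\deg f}$ is the leading coefficient of $g$, but it deserves an explicit sentence in your version.
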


\begin{proof}
The proof follows an argument from \cite[Lemma~13.39]{washington2012introduction}. First we prove the existence of $h$. Note that $\deg g = p^m \cdot \deg f$. Moreover,
\begin{equation}\label{eq: g symmetry}
    g\bigl(\zeta^i(1+T)-1\bigr) = g(T)
\end{equation}
for all $0 \leq i \leq p^m-1$. In particular, substituting $T = \zeta^j - 1$ shows that $g(\zeta^j - 1) = g(0)$ for all $0 \leq j \leq p^m - 1$.

It follows that
\[
g(T) = g(0) + \bigl((1+T)^{p^m} - 1\bigr) \cdot g_1(T)
\]
for some $g_1(T) \in \mathbb{Z}_p[T]$ with $\deg g_1 = \deg g - p^m$. By \eqref{eq: g symmetry}, the polynomial $g_1$ also satisfies
\[
g_1\bigl(\zeta^i(1+T)-1\bigr) = g_1(T)
\]
for all $0 \leq i \leq p^m - 1$. Repeating the above argument for $g_1$, we write
\[
g_1(T) = g_1(0) + \bigl((1+T)^{p^m} - 1\bigr) \cdot g_2(T)
\]
with $g_2(T) \in \mathbb{Z}_p[T]$ and $\deg g_2 = \deg g - 2p^m$. Continuing inductively, we obtain
\[
g(T) = \sum_{i=0}^{\deg f} a_i \bigl((1+T)^{p^m} - 1\bigr)^i
\]
for some $a_0, \dots, a_{\deg f} \in \mathbb{Z}_p$. Thus we may take
\[
h(T) := \sum_{i=0}^{\deg f} a_i T^i \in \mathbb{Z}_p[T].
\]

To prove uniqueness, suppose $h_1, h_2 \in \mathbb{Z}_p[T]$ both satisfy
\[
h_1\bigl((1+T)^{p^m} - 1\bigr) = h_2\bigl((1+T)^{p^m} - 1\bigr) = g(T).
\]
Then for every $n \geq m$ and any primitive $p^n$-th root of unity $\zeta_{p^n}$, we have
\[
h_1(\zeta_{p^n}^m - 1) = g(\zeta_{p^n} - 1) = h_2(\zeta_{p^n}^m - 1).
\]
Since this holds for infinitely many distinct values $\zeta_{p^n}^m - 1$, it follows that $h_1 - h_2$ has infinitely many roots and is therefore the zero polynomial. Thus $h_1 = h_2$, proving uniqueness.
\end{proof}
\begin{remark}
From the proof above, we see that $\deg h = \deg f$. In particular, if $f \in \Lambda_n$ for some $n \geq 0$, then $h$ also lies in $\Lambda_n$ when viewed as a polynomial in $T$.
\end{remark}

Consider the product 
\[g(T):=\prod_{\psi \in \widehat{\op{Gal}(K/\Q)}} \Theta_{n+n_K}^{M_\psi}(f,\psi,T) \]
and write each character $\psi \in \widehat{\op{Gal}(K/\Q)}$ as $\psi_1\psi_2$ where $\psi_1$ has order prime to $p$ and $\psi_1$ has $p$-power order (note that this implies $\psi_2$ has order dividing $p^{n_K}$, and every character $\psi_2$ corresponds to $\zeta_{p^{n_K}}^i$ for exactly one $i$ as $i$ varies from $0$ to $p^{n_K}-1$). Then we can rewrite this product as 
\[g(T)=\prod_{\psi_1}\prod_{i=0}^{p^{n_K}-1} \Theta_{n+n_K}^{M_\psi}(f,\psi_1,\zeta_{p^{n_K}}^i(1+T)-1). \]
\begin{figure}[h]
    \centering
    \[\begin{tikzcd}
	{K_{(n)}:=K\mathbb{Q}_{(n+n_K)}} \\
	K & {\mathbb{Q}_{(n+n_K)}} \\
	& {\mathbb{Q}_{(n_K)}} \\
	& {\mathbb{Q}}
	\arrow[no head, from=2-1, to=1-1]
	\arrow[no head, from=2-2, to=1-1]
	\arrow[no head, from=3-2, to=2-1]
	\arrow[no head, from=3-2, to=2-2]
	\arrow[no head, from=4-2, to=3-2]
\end{tikzcd}\]
    \caption{$g(T)$ decomposes as a product of Mazur-Tate elements $\Theta_{n+n_K}^{M_\psi}(f,\psi,T)$ for the extension $\Q_{(n+n_K)}/\Q$}
    \label{fig:placeholder}
\end{figure}
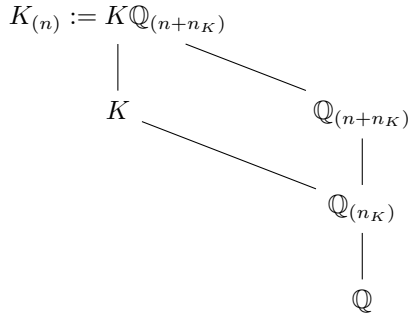

Then, Lemma \ref{lemma about g and h} implies that $g(T)=h((1+T)^{p^{n_k}}-1)$ for some $h \in \Lambda_{n+n_K}$. We define the Mazur--Tate element for $f$ over $K$ via this polynomial $h$ as follows:
\begin{definition}\label{defn: MT elements over K}
Let \(K/\mathbb Q\) be a finite abelian extension, and let \(n \geq 1\). Define
\[
g_{K,n}(T)
:=
\prod_{\psi\in \widehat{\operatorname{Gal}(K/\mathbb Q)}}
\Theta^{M_{\psi}}_{n+n_K}(f,\psi,T)
\in \Lambda_{n+n_K}.
\]
By Lemma~\ref{lemma about g and h}, there exists a unique element
\[
\Theta_n(f/K,T)\in \Lambda_{n+n_K}
\]
such that
\[
\Theta_n\bigl(f/K,(1+T)^{p^{n_K}}-1\bigr)
=
g_{K,n}(T).
\]
We call \(\Theta_n(f/K,T)\) the Mazur--Tate element attached to \(f/K\) at level \(n\).
\end{definition}
\begin{remark}\label{remark: sum decomp for iwasawa invs f/K}
For any $F(T) \in \cO[[T]]$ and some $n\geq1$, the power series $g(T)\coloneqq F((1+T)^{p^n}-1)$ satisfies 
\[\mu(g(T))=\mu(F(T)) \quad\text{and}\quad \lambda(g(T))=p^n\lambda(F(T)).\]
This follows from a direct application of the Weierstrass preparation theorem and noting that $u(T) \in \cO[[T]]^\times$ implies $u((1+T)^{p^n}-1) \in \cO[[T]]^\times$. Thus, we have 
\[\begin{split}&\mu(\Theta_n(f/K,T))=\sum_{\psi\in \widehat{\operatorname{Gal}(K/\mathbb Q)}}
\mu(\Theta^{M_{\psi}}_{n+n_K}(f,\psi,T)),\\
& p^{n_K}\lambda(\Theta_n(f/K,T))=\sum_{\psi\in \widehat{\operatorname{Gal}(K/\mathbb Q)}}
\lambda(\Theta^{M_{\psi}}_{n+n_K}(f,\psi,T)).\end{split}\]
\end{remark}
Let $\rho_f$ be the $\ell$-adic Galois representation attached to $f$. Given a character $\eta:\op{G}_K\rightarrow \bar{\Q}_\ell$, denote by $L(f/K, \eta, s)$ the $L$-function associated to $\rho_{f|\op{G}_K}\otimes \eta$, as in \cite[p.~16]{Tate}.

\begin{proposition}
    For $n\geq1$, let $\chi_{K_{(n)}/K}$ denote a character on $\op{Gal}(K_{(n)}/K)$ of order $p^n$ and let $\zeta_{p^n}$ denote a primitive $p^n$th root of unity. Then, we have 
    \[\Theta_{n}(f/K, \zeta_{p^n}-1)= \prod_{\psi \in \widehat{\op{Gal}(K/\Q)}} \tau(\psi\chi_{n+n_K})\cdot\frac{L(f/K, \chi_{K_{(n)}/K},1)}{(\Omega_f^+)^{r_1}(\Omega_f^-)^{2r_2}},\]
    where $r_1$ (resp. $2r_2$) denotes the number of real (resp. complex) embeddings of $K\hookrightarrow\mathbb{C}$.
\end{proposition}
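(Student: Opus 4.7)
The plan is to evaluate the product formula of Definition~\ref{defn: MT elements over K} at a suitably chosen root of unity and then unpack each factor via the interpolation formula of Proposition~\ref{prop: interpolation property}.

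First, I would lift $\zeta_{p^n}$ to a primitive $p^{n+n_K}$-th root of unity $\zeta_{p^{n+n_K}}$ satisfying $\zeta_{p^{n+n_K}}^{p^{n_K}}=\zeta_{p^n}$, and specialize the defining identity of $\Theta_n(f/K,T)$ at $T=\zeta_{p^{n+n_K}}-1$. The left-hand side becomes $\Theta_n(f/K,\zeta_{p^n}-1)$ while the right-hand side is
\[
\prod_{\psi\in\widehat{\op{Gal}(K/\Q)}}\Theta_{n+n_K}^{M_\psi}\bigl(f,\psi,\zeta_{p^{n+n_K}}-1\bigr).
\]
Since every $\psi\in\widehat{\op{Gal}(K/\Q)}$ has $p$-part of conductor at most $p^{n_K+1}$, the hypothesis of Proposition~\ref{prop: interpolation property}(a) at level $n+n_K$ is met for $n\geq 1$. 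Applying it to each factor, and using that $\chi_{n+n_K}(-1)=1$ because $p$ is odd, so that $(\psi\chi_{n+n_K})(-1)=\psi(-1)$, I obtain
\[
\Theta_{n+n_K}^{M_\psi}\bigl(f,\psi,\zeta_{p^{n+n_K}}-1\bigr)\;=\;\tau(\psi\chi_{n+n_K})\cdot\frac{L(f,\overline{\psi\chi_{n+n_K}},1)}{\Omega_f^{\psi(-1)}}.
\]

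Next, I would invoke the Artin formalism for $L$-functions of modular forms base-changed to an abelian extension. Identifying $\chi_{K_{(n)}/K}$ with the restriction of $\overline{\chi_{n+n_K}}$ to $\op{Gal}(K_{(n)}/K)$, the characters $\{\overline{\psi\chi_{n+n_K}}\}_{\psi}$ run (without repetition) through exactly the lifts of $\chi_{K_{(n)}/K}$ to characters of $\op{Gal}(K_{(n)}/\Q)$, as $\psi$ varies over $\widehat{\op{Gal}(K/\Q)}$. The standard factorization $L(f/K,\chi_{K_{(n)}/K},s)=\prod_{\eta}L(f,\eta,s)$, where $\eta$ ranges over these lifts, then yields
\[
\prod_{\psi\in\widehat{\op{Gal}(K/\Q)}}L(f,\overline{\psi\chi_{n+n_K}},1)\;=\;L(f/K,\chi_{K_{(n)}/K},1).
\]
Pulling out the Gauss-sum factor $\prod_\psi\tau(\psi\chi_{n+n_K})$ matches the numerator of the stated formula exactly.

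Finally, the period product $\prod_\psi\Omega_f^{\psi(-1)}$ is reorganized by partitioning $\widehat{\op{Gal}(K/\Q)}$ according to the sign $\psi(-1)$. Since $K/\Q$ is abelian it is either totally real or CM, and in each case the cardinalities of the two subsets are determined by $[K^+:\Q]$ and $[K:K^+]$, hence by the signature $(r_1,2r_2)$ of $K$; collecting the resulting powers of $\Omega_f^\pm$ produces the denominator in the form claimed. The main obstacle is the Artin-formalism step: one must verify that the characters $\overline{\psi\chi_{n+n_K}}$ genuinely exhaust the lifts of the chosen $\chi_{K_{(n)}/K}$, which reduces to showing $\op{Gal}(K_{(n)}/\Q)$ is the fiber product of $\op{Gal}(K/\Q)$ and $\op{Gal}(\Q_{(n+n_K)}/\Q)$ over $\op{Gal}(\Q_{(n_K)}/\Q)$, and to tracking how the cyclotomic identification underlying $\chi_{n+n_K}$ interacts with the $p$-parts of the characters $\psi$.
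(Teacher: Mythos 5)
Your proposal is correct and takes essentially the same route as the paper's proof: specialize the defining product of Definition~\ref{defn: MT elements over K} at a lift $\zeta_{p^{n+n_K}}$ of $\zeta_{p^n}$, apply Proposition~\ref{prop: interpolation property}(a) to each factor, and combine the factorization $L(f/K,\chi_{K_{(n)}/K},s)=\prod_{\psi}L(f,\psi\chi_{n+n_K},s)$ with the even/odd character count to produce the period $(\Omega_f^+)^{r_1}(\Omega_f^-)^{2r_2}$. The only cosmetic difference is that you justify the $L$-function factorization via lifts of $\chi_{K_{(n)}/K}$ to $\op{Gal}(K_{(n)}/\Q)$, whereas the paper induces $\chi_{K_{(n)}/K}$ to $\op{Gal}(\overline{\Q}/\Q)$ and decomposes the induction as $\chi_{n+n_K}\otimes\bigoplus_{\psi}\psi$; these amount to the same computation.
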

\begin{proof}
   We have
    \[L(f/K,\chi_{K_{(n)}/K},s)= \prod_{\psi \in \widehat{\op{Gal}(K/\Q)}} L(f,\psi\chi_{n+n_K},s).\]
    To see this, note that the induction to $\op{Gal}(\overline{\Q}/\Q)$ of the character $\chi_{K_{(n)}/K}$ (viewed as a $\op{Gal}(\overline{\Q}/K)$-representation) is $\chi_{{n+n_K}}\otimes \bigoplus_{\psi \in \widehat{\op{Gal}(K/\Q)}} \psi$, from which the factorisation for the $L$-function follows. Then, using the interpolation property of $\Theta_{n+n_K}^{M_\psi}(f,\psi,T)$ from Proposition \ref{prop: interpolation property} we get 
    \[\prod_{\psi \in \widehat{\op{Gal}(K/\Q)}} \Theta_{n+n_K}^{M_\psi}(f,\psi,\zeta_{p^{n+n_K}}-1) = \prod_{\psi \in \widehat{\op{Gal}(K/\Q)}} \tau(\psi\chi_{n+n_K})\cdot\frac{L(f/K, \chi_{K_{(n)}/K},1)}{(\Omega_f^+)^{r_1}(\Omega_f^-)^{2r_2}}\]
    for a primitive $p^{n+n_K}$th root of unity.
    Note that the normalisation of modular symbols on the right hand side is assumed to be using the $\Omega_f^\pm$, so the $L$-value on the right hand side is normalised by $(\Omega_f^+)^{r_1}(\Omega_f^-)^{2r_2}$ where $r_1$ (resp. $2r_2$) denotes the number of real (resp. complex) embeddings of $K\hookrightarrow\mathbb{C}$. This is because each even (resp. odd) character on $\op{Gal}(K/\Q)$ determines a real (resp. complex) embedding into $\mathbb{C}$. 

    Finally, note that we have, by definition,  
    \[\Theta_n(f/K,(\zeta_{p^{n+n_K}})^{p^{n_K}}-1)=  \prod_{\psi \in \widehat{\op{Gal}(K/\Q)}} \Theta_{n+n_K}^{M_\psi}(f,\psi,\zeta_{p^{n+n_K}}-1),\]
    and that any primitive $p^n$th root of unity $\zeta_{p^n}$ can be written as $\zeta_{p^{n+n_K}}^{p^{n_K}}$ for some primitive $p^{{n+n_K}}$th root of unity $\zeta_{p^{n+n_K}}$.
\end{proof}
\section{A Kida-type formula for the $\Theta$ elements}\label{s 3}
\par In this section, we prove our main result. Let $\cO$ be a finite extension of $\Z_p$ and let $\pi$ be a chosen uniformizer. For a cuspidal eigenform $f$ and a character $\chi$ of conductor $p^mM$ with $(p,M)=1$ taking values in $\cO^\times$, for $n\geq m$ define the twisted Iwasawa invariants:
    \[\mu_n^M(\chi)\coloneqq \mu(\Theta_n^M(f,\chi,T))\quad \text{and}\quad\lambda_n^M(\chi)\coloneqq\lambda(\Theta_n^M(f,\chi,T))\]
in accordance with \eqref{finite mu and lambda}.
\begin{proposition}
    Let $\chi, \psi$ be finite order characters on $\mathscr{G}_{n}^M$ taking values in $\cO^\times$. Assume $\chi$ has $p$-power order and $\mu_{n}^{M}(f,\psi)=0$. Then we have 
    \[\mu_{n}^{M}(\chi\psi)=0 \text{ and } \lambda_{n}^{M}(\chi\psi)=\lambda_{n}^{M}(\psi).\]
\end{proposition}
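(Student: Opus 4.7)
The plan is to establish the coefficient-wise congruence
\[
\Theta_n^M(f,\chi\psi,T)\;\equiv\;\Theta_n^M(f,\psi,T)\pmod{\pi\Lambda_n}
\]
and then read off both conclusions from this. The whole argument is driven by the observation that a twist by a character of $p$-power order becomes trivial modulo the uniformizer $\pi$ of $\cO$.

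For the congruence, I would argue as follows. Since $\chi$ has $p$-power order, every value $\chi(a)\in\cO^\times$ is a $p^k$-th root of unity for some $k$. Reducing modulo $\pi$ lands in the residue field of characteristic $p$, in which the only $p$-power root of unity is $1$: if $\bar\zeta^{p^k}=1$ then $(\bar\zeta-1)^{p^k}=0$ in a field, forcing $\bar\zeta=1$. Hence $\chi(a)\equiv 1\pmod{\pi}$ for every $a\in(\Z/p^{n+1}M\Z)^\times$. Now compare the defining formulas in Definition~\ref{defn: mt elements} term by term: the summand for $\chi\psi$ differs from the summand for $\psi$ only by the extra factor $\chi(a)$, and hence their difference lies in $\pi\cO$. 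Summing over $a$ yields the claimed congruence in $\Lambda_n$.

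To deduce the statement about invariants, write $\Theta_n^M(f,\psi,T)=\sum_i a_i T^i$ and $\Theta_n^M(f,\chi\psi,T)=\sum_i b_i T^i$ in $\Lambda_n$, so that $a_i\equiv b_i\pmod{\pi}$ for every $i$. By the hypothesis $\mu_n^M(\psi)=0$ and the definition \eqref{finite mu and lambda}, some $a_i$ is a unit in $\cO$, and $\lambda:=\lambda_n^M(\psi)$ is the smallest such index; equivalently, $a_i\in\pi\cO$ for $i<\lambda$ and $a_\lambda\in\cO^\times$. The congruence transfers both conditions verbatim to the $b_i$'s: $b_i\in\pi\cO$ for $i<\lambda$, while $b_\lambda\equiv a_\lambda\not\equiv 0\pmod{\pi}$, so $b_\lambda\in\cO^\times$. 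Applying \eqref{finite mu and lambda} again, we conclude $\mu_n^M(\chi\psi)=0$ and $\lambda_n^M(\chi\psi)=\lambda=\lambda_n^M(\psi)$.

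There is no real obstacle in this proposition; it is a clean instance of the general principle that if $F\equiv G\pmod{\pi}$ in $\Lambda_n$ and $\mu(F)=0$, then $\mu(G)=0$ and $\lambda(F)=\lambda(G)$. The only mild point of care is to interpret \textquotedblleft mod $\pi$\textquotedblright\ using the uniformizer of $\cO$ rather than of $\Z_p$, consistently with the normalization of $\ord_p$ used in \eqref{finite mu and lambda}; this plays no substantive role beyond keeping the valuation bookkeeping clean.
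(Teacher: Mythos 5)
Your proposal is correct and takes essentially the same approach as the paper: both reduce the statement to the coefficient-wise congruence $\Theta_n^M(f,\chi\psi,T)\equiv\Theta_n^M(f,\psi,T)\pmod{\pi\Lambda_n}$, deduced from the fact that a $p$-power order character satisfies $\chi(a)\equiv 1\pmod{\pi}$, and then transfer $\mu=0$ and the value of $\lambda$ between congruent elements of $\Lambda_n$. Your explicit comparison of the coefficients $a_i$ and $b_i$ is, if anything, slightly more careful than the paper's appeal to the general principle (which it cites from Matsuno's Lemma~3.2).
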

\begin{proof}
    The argument is similar to that in the proof of \cite[Lemma 3.2]{MATSUNO200080}. Given $F,G\in \Lambda_n$ such that $F\equiv G\pmod{(\pi)}$, one has that 
    \[\mu(F)=0\Leftrightarrow \mu(G)=0.\] Further, if the above $\mu$-invariants vanish, then $F\equiv cT^{\lambda(F)}\pmod{\pi \Lambda_n}$ where $c\in (\cO/\pi)^\times$. We deduce that $\lambda(F)=\lambda(G)$. Thus it suffices to show that \begin{equation}\label{congruence theta}\Theta_{n}^{M}(f,\chi\psi,T)\equiv \Theta_{n}^{M}(f,\psi, T) \pmod{\pi\Lambda_{n}}.\end{equation}
    Since $\chi$ has $p$-power order, $\chi(x)\equiv 1 \pmod{\pi}$ for any $x \in (\Z/p^nM\Z)^\times$. This gives us the congruence \eqref{congruence theta} and the result follows.
\end{proof}

We define an integer $g_{\psi,n}(\ell)$ in terms of the decomposition of primes in the finite layers of the cyclotomic $\Zp$-extension $\Q_\infty/\Q$. This will be crucial to our proof of Kida's formula. 
\begin{definition}\label{long definition}
    Given a rational prime $\ell$ and an algebraic extension $F/\Q$, let $g_F(\ell)$ be the number of primes of $F$ lying above $\ell$. If $\ell$ is infinitely decomposed in $F$, set $g_F(\ell):=\infty$. For a character $\psi:\mathscr{G}_{n}^M\rightarrow \cO^\times$ and a Hecke eigen-cuspform $f=\sum_n a_ne(nz)\in S_k(\Gamma_0(N))$, define $g_{\psi, n}(\ell)$ as follows:
    \begin{itemize}
        \item[(i)] If $\psi(\ell)\neq 0$ and $\ell\nmid N$, 
        \[g_{\psi,n}(\ell)=\begin{cases}
            0 & \text{if } a_\ell\not\equiv \psi(\ell)+\ell^{k-2}\psi(\ell)^{-1} \pmod{\pi} \\
            g_{\Q_{(n)}}(\ell) & \text{if } a_\ell\equiv \psi(\ell)+\ell^{k-2}\psi(\ell)^{-1} \pmod{\pi}, \ a_\ell\not\equiv\pm 2\ell^{\frac{k-2}{2}} \pmod{\pi}\\
            2g_{\Q_{(n)}}(\ell) & \text{otherwise.}
        \end{cases}\]
        \item[(ii)] If $\psi(\ell)\neq 0$ and $\ell\mid N$, 
        \[g_{\psi,n}(\ell)=\begin{cases}
            0 & \text{if } a_\ell\not\equiv \psi(\ell) \pmod{\pi} \\
            g_{\Q_{(n)}}(\ell) & \text{if } a_\ell\equiv \psi(\ell) \pmod{\pi}.
        \end{cases}\]
        \item[(iii)] If $\psi(\ell)=0$, then $g_{\psi,n}(\ell)=0$.
    \end{itemize}
\end{definition}
 \noindent We note here that $g_{\psi,n}(\ell)$ is a finite layer variant of $g_{\chi}(\ell)$ considered in \cite{MATSUNO200080}.
\begin{proposition}\label{prop: lambda inv from euler factor}
Let $\psi$ be a Dirichlet character of modulus $p^nM$, and assume that the order of $\psi$ is prime to $p$. Let $\ell$ be a prime such that $(\ell, pM)=1$. Then
\begin{equation}\label{Propn 3.3 formulae}
\mu_n^{M\ell}(\psi)=\mu_n^{M}(\psi)
\qquad\text{and}\qquad
\lambda_n^{M\ell}(\psi)=\lambda_n^{M}(\psi)+g_{\psi,n}(\ell).
\end{equation}
\end{proposition}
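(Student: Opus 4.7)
The proof proceeds from the factorization
\[
\Theta_n^{M\ell}(f,\psi,T) \;=\; h_\ell(\psi,T)\cdot\Theta_n^M(f,\psi,T) \quad \text{in } \Lambda_n,
\]
furnished by Lemma \ref{lem: pi n compatibility}. Reducing modulo $\pi$ embeds $\Lambda_n$ into $\Lambda_n/\pi \cong \F[T]/T^{p^n}$ (using $(1+T)^{p^n}-1 \equiv T^{p^n} \pmod p$), where the order of vanishing of the image at $T=0$ recovers $\lambda$, and the vanishing itself detects $\mu>0$. Since orders add multiplicatively on $\F[T]/T^{p^n}$ whenever the sum stays below $p^n$, it suffices to establish $\mu(h_\ell(\psi,T))=0$ and $\lambda(h_\ell(\psi,T))=g_{\psi,n}(\ell)$. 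The assumption $\ell\nmid pM$ gives $\psi(\ell)\in\cO^\times$, so case (iii) of Definition \ref{long definition} does not arise.

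In case (ii), when $\ell\mid N$ so that $\epsilon_N(\ell)=0$, one has $h_\ell(\psi,T)=a_\ell-\psi(\ell)(1+T)^{t_n(\ell)}$. If $a_\ell\not\equiv\psi(\ell)\pmod{\pi}$, evaluation at $T=0$ shows $h_\ell$ is a unit; otherwise
\[
h_\ell(\psi,T)\;\equiv\;-\psi(\ell)\bigl((1+T)^{t_n(\ell)}-1\bigr)\pmod{\pi},
\]
whose order in $\F[T]/T^{p^n}$ is $p^{v_p(t_n(\ell)\bmod p^n)}=\gcd(t_n(\ell),p^n)=g_{\Q_{(n)}}(\ell)$, matching $g_{\psi,n}(\ell)$ exactly.

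In case (i), when $\ell\nmid N$, I would extend $\cO$ if necessary to contain the roots $\alpha,\beta$ of the Hecke polynomial $X^2-a_\ell X+\ell^{k-2}$ (this is harmless as $\lambda$ is preserved under unramified extensions). Setting $u:=(1+T)^{t_n(\ell)}\in\Lambda_n^\times$ and multiplying through yields
\[
u\cdot h_\ell(\psi,T) \;=\; -\psi(\ell)\bigl(u-\alpha\psi(\ell)^{-1}\bigr)\bigl(u-\beta\psi(\ell)^{-1}\bigr).
\]
A factor $u-c$ is a unit in $\Lambda_n/\pi$ when $c\not\equiv 1\pmod{\pi}$, and otherwise reduces to $(1+T)^{t_n(\ell)}-1$ of order $p^{v_p(t_n(\ell)\bmod p^n)}=g_{\Q_{(n)}}(\ell)$. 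The condition $\alpha\psi(\ell)^{-1}\equiv 1$ or $\beta\psi(\ell)^{-1}\equiv 1$ is equivalent to $a_\ell\equiv\psi(\ell)+\ell^{k-2}\psi(\ell)^{-1}\pmod{\pi}$ (subcase i.b), and both hold simultaneously precisely when additionally $\psi(\ell)^2\equiv\ell^{k-2}$, i.e.\ $a_\ell\equiv\pm 2\ell^{(k-2)/2}\pmod{\pi}$ (subcase i.c). Thus zero, one, or two singular factors correspond respectively to orders $0$, $g_{\Q_{(n)}}(\ell)$, $2g_{\Q_{(n)}}(\ell)$, exactly reproducing the three subcases of the definition.

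The principal subtlety is the multiplicativity of Iwasawa invariants in the \emph{finite} ring $\Lambda_n$, which is only valid when $\lambda(h_\ell)+\lambda(\Theta_n^M(f,\psi,T))<p^n$. This can fail only in the degenerate regime where $\ell$ is fully split at level $n$ (so that $u=1$ in $\Lambda_n$ and $h_\ell$ reduces to a constant that may vanish modulo $\pi$). Since $\ell$ cannot be fully split at arbitrarily high layers of $\Q_\infty/\Q$ (as $\Z_p$ has no nontrivial finite subgroups), for $n$ larger than the stable decomposition level of $\ell$ one has $g_{\Q_{(n)}}(\ell)$ bounded and the multiplicativity is automatic; the remaining boundary cases are handled by direct inspection of the reduction modulo $\pi$.
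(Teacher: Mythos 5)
Your proposal is correct in substance and follows the same skeleton as the paper's proof: both reduce via Lemma \ref{lem: pi n compatibility} to showing $\mu(h_\ell(\psi,T))=0$ and $\lambda(h_\ell(\psi,T))=g_{\psi,n}(\ell)$, and both then transfer these invariants to the product using the identification $\Lambda_n/\pi\cong\F[T]/(T^{p^n})$. Where you genuinely diverge is in the central computation. The paper writes $t_n(\ell)=up^a$ with $u\in(\Z/p^n\Z)^\times$, expands $(1+T)^{up^a}\equiv 1+uT^{p^a}+\binom{u}{2}T^{2p^a}$ modulo $(\pi,T^{3p^a})$, and reads off the first nonvanishing coefficient through a three-way case analysis on the quantities $A$, $B$, $C$ (with $C\equiv\pm2\ell^{(k-2)/2}u^2$ in the doubly degenerate case). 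You instead multiply by the unit $u:=(1+T)^{t_n(\ell)}$ and factor $u\cdot h_\ell(\psi,T)=-\psi(\ell)\bigl(u-\alpha\psi(\ell)^{-1}\bigr)\bigl(u-\beta\psi(\ell)^{-1}\bigr)$ through the roots of the Hecke polynomial; the count of singular factors (zero, one, or two, according to whether $\psi(\ell)$ is congruent to neither, one, or both roots) then reproduces the three subcases of Definition \ref{long definition}(i) without any binomial bookkeeping, since each singular factor contributes exactly $p^{v_p(t_n(\ell))}=g_{\Q_{(n)}}(\ell)$. This is cleaner, makes the appearance of the discriminant condition $a_\ell\equiv\pm2\ell^{(k-2)/2}$ conceptually transparent, and generalizes immediately to nontrivial nebentypus (compare Proposition \ref{prop: sum g_chi higher weight}); the mild price is the scalar extension to adjoin $\alpha,\beta$, which you correctly note is harmless for $\lambda$ and for the vanishing of $\mu$.

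Two caveats on your final paragraph. First, you deserve credit for raising the finite-level multiplicativity issue at all: the paper asserts "it suffices to analyze $h_\ell$" without comment, silently assuming $\lambda(h_\ell)+\lambda(\Theta_n^M(f,\psi,T))<p^n$ (and, by writing $t_n(\ell)=up^a$ with $u$ a unit and $0\le a\le n-1$, implicitly excluding $t_n(\ell)=0$). Second, however, your diagnosis of when multiplicativity can fail is imprecise: overflow in $\F[T]/(T^{p^n})$ occurs whenever $\lambda(h_\ell)+\lambda(\Theta_n^M(f,\psi,T))\ge p^n$, which can happen with $\ell$ not fully split if $\lambda(\Theta_n^M(f,\psi,T))$ itself exceeds $p^n-2p^{a}$; the fully split case $t_n(\ell)=0$ (where $h_\ell$ is a scalar, possibly in $\pi\cO$, and $g_{\Q_{(n)}}(\ell)=p^n$ already exceeds any possible $\lambda$ in $\Lambda_n$) is merely the most visible degeneration. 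Your closing appeal to "direct inspection" does not actually dispose of these regimes, but since the paper's own proof imposes no hypothesis ruling them out either, this is a shared boundary-case gap rather than a defect specific to your argument; a clean fix is to add the hypothesis $\lambda_n^M(\psi)+g_{\psi,n}(\ell)<p^n$ (harmless in the application, where $n$ is taken large relative to the fixed data at $\ell$).
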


\begin{proof}
By Lemma~\ref{lem: pi n compatibility} it suffices to analyze the term
\[
h_\ell(\psi,T)
:=a_\ell-\psi(\ell)(1+T)^{t_n(\ell)}-\epsilon_N(\ell)\ell^{k-2}\psi(\ell)^{-1}(1+T)^{-t_n(\ell)}.
\]
Write $t_n(\ell)=u p^{a}$ with $0\le a\le n-1$ and $u\in(\Z/p^n\Z)^\times$.  Recall that
\[
\chi_n(\sigma_\ell)=\langle\ell\rangle=\chi_n(\gamma_n)^{t_n(\ell)},
\]
so the index of the decomposition group of $\ell$ in $\op{Gal}(\Q_{(n)}/\Q)$ is $p^a$, i.e. $p^{a}=g_{\Q_{(n)}}(\ell)$. It suffices to show that 
\begin{equation}\label{mu lambda for hell}
\mu\left(h_\ell(\psi, T)\right)=0\quad \text{and}\quad \lambda\left(h_\ell(\psi, T)\right)=g_{\psi, n}(\ell).
\end{equation}
We show that not all coefficients of $h_\ell(\psi, T)$ are divisible by $\pi$ and determine the smallest nonzero degree, which is the $\lambda$-invariant of $h_\ell(\psi, T)$. 
\par Working modulo \(\pi\) in the coefficient ring, we use the congruence
\[
(1+T)^{p^{a}} \equiv 1+T^{p^{a}} \pmod{\pi}.
\]
\noindent Expanding $(1+T^{p^{a}})^{u}$ by the binomial theorem yields
\[
(1+T)^{u p^{a}} \equiv 1 + u T^{p^{a}} + \binom{u}{2} T^{2p^{a}}
    \pmod{(\pi,\,T^{3p^{a}})}.
\]
\par First consider the case when $\ell\nmid N$. Here $\epsilon_N(\ell)=1$, so substituting the above expansions into $h_\ell(\psi,T)$ and working modulo $(\pi,T^{3p^{a}})$ gives
\begin{align}\label{eq: expansion-mod-p}
h_\ell(\psi,T)
&\equiv a_\ell -\bigl(\psi(\ell)+\ell^{k-2}\psi(\ell)^{-1}\bigr)\\
&\qquad -\bigl(\ell^{k-2}\psi(\ell)^{-1}-\psi(\ell)\bigr)u\,T^{p^{a}}\\
&\qquad-\tfrac{1}{2}\bigl(\bigl(\psi(\ell)+\ell^{k-2}\psi(\ell)^{-1}\bigr)u^2-\bigl(\psi(\ell)-\ell^{k-2}\psi(\ell)^{-1}\bigr)u)\bigr)T^{2p^{a}}
\pmod{(\pi,T^{3p^{a}})}.\nonumber
\end{align}
Set
\begin{align*}
    A:=&a_\ell-\bigl(\psi(\ell)+\ell^{k-2}\psi(\ell)^{-1}\bigr),\qquad
B:=\ell^{k-2}\psi(\ell)^{-1}-\psi(\ell),\qquad\\
C:=&\bigl(\psi(\ell)+\ell^{k-2}\psi(\ell)^{-1}\bigr)u^2-\bigl(\psi(\ell)-\ell^{k-2}\psi(\ell)^{-1}\bigr)u.
\end{align*}
Now reduce the coefficients modulo $\pi$. The first nonzero term of $h_\ell(\psi,T)$ modulo $\pi$ occurs in one of the following degrees:
\begin{itemize}
    \item If $A\not\equiv 0\pmod{\pi}$ then the constant term is nonzero modulo $\pi$, so the first nonzero term has degree $0$. Thus the $\mu$-invariant and $\lambda$-invariant of $h_\ell(\psi, T)$ are both $0$. On the other hand, if $A\not\equiv 0\pmod{\pi}$, then $g_{\psi,n}(\ell)=0$ according to Definition~\ref{long definition}(i). Thus, the formulae \eqref{mu lambda for hell} are satisfied.
    \item If $A\equiv 0\pmod{\pi}$ but $B\not\equiv 0\pmod{\pi}$ then the term with smallest degree is $-Bu\,T^{p^{a}}$, so \[\lambda\left(h_\ell(\psi, T)\right)=p^{a}=g_{\Q_{(n)}}(\ell).\]  The requirement $A\equiv 0\mod{\pi}$ is equivalent to the condition 
    \[a_\ell\equiv \psi(\ell)+\ell^{k-2} \psi(\ell)^{-1}\pmod{\pi}.\] On the other hand $B\equiv 0\pmod{\pi}$ if and only if $\psi(\ell)\equiv \pm \ell^{\frac{k-2}{2}}\pmod{\pi}$. Therefore in particular, $B\equiv 0\pmod{\pi}$ implies that $a_\ell\equiv \pm 2\ell^{\frac{k-2}{2}}\pmod{\pi}$. Hence if $A\equiv 0\pmod{\pi}$ and $B\equiv 0\pmod{\pi}$ then \[a_\ell\equiv \psi(\ell)+\ell^{k-2}\psi(\ell)^{-1} \pmod{\pi}\] and \[a_\ell\not\equiv\pm 2\ell^{\frac{k-2}{2}} \pmod{\pi}.\]
    Thus, $g_{\psi,n}(\ell)=g_{\Q_{(n)}}(\ell)$ in Definition~\ref{long definition}(i) and it follows that the conditions \eqref{mu lambda for hell} are satisfied.
 \item If $A\equiv 0\equiv B\pmod{\pi}$ then $\psi(\ell)^2\equiv\ell^{k-2}\pmod{\pi}$, and $a_\ell\equiv\pm2\ell^{\frac{k-2}{2}}$ since $p\nmid 2\ell$ we have $C\equiv \pm2\ell^{\frac{k-2}{2}}\not\equiv 0\pmod{\pi}$. The term $-\tfrac{1}{2}CT^{2p^{a}}$
    is therefore the first nonzero term in this case. Thus, the first non-zero term has degree $2p^{a}=2g_{\Q_{(n)}}(\ell)$, matching $g_{\psi,n}(\ell)=2g_{\Q_{(n)}}(\ell)$ in Definition~\ref{long definition}(i).
\end{itemize}
\par Next we assume that \(\ell\mid N\) and thus, $\epsilon_N(\ell)=0$.  Working as above modulo $(p,T^{p^{a}+1})$ we have
\[
h_\ell(\psi,T)\equiv a_\ell-\psi(\ell)(1+T)^{u p^{a}}
\equiv (a_\ell-\psi(\ell))+\psi(\ell)u\,T^{p^{a}}\pmod{(p,T^{p^{a}+1})}.
\]
Reducing coefficients modulo $\pi$ yields:

\begin{itemize}
    \item If $a_\ell\not\equiv\psi(\ell)\pmod{\pi}$ then the constant term is nonzero modulo $\pi$, so the first nonzero term has degree $0$, corresponding to $g_{\psi,n}(\ell)=0$ in Definition~\ref{long definition}(ii).
    \item If $a_\ell\equiv\psi(\ell)\pmod{\pi}$ then the first nonzero term is $\psi(\ell)u\,T^{p^{a}}$, of degree $p^{a}=g_{\Q_{(n)}}(\ell)$, corresponding to $g_{\psi,n}(\ell)=g_{\Q_{(n)}}(\ell)$ in Definition~\ref{long definition}(ii).
\end{itemize}
This completes the proof.
\end{proof}
We have the following proposition for the case of elliptic curves defined over $\Q$. This can be viewed as a finite layer analogue of \cite[Lemma 3.4]{MATSUNO200080}.
\begin{proposition}\label{prop: sum over g_chi}
    Let $K/\Q$ be an abelian extension and assume that $E_{/K}$ satisfies the condition \textbf{(Add)} defined above. Assume that all characters $\chi\in \widehat{\op{Gal}(K/\Q)}$ take values in $\cO^\times$. Let $\ell$ be a prime in $\Q$ and $v$ a prime of $K$ lying above $\ell$. Assume that $p\nmid [K:\Q]$ and $\ell\equiv 1\pmod{p}$. Then, 
    \begin{align*}
        \sum_{\chi \in \widehat{\op{Gal}(K/\Q)}}g_{\chi,n}(\ell)=\begin{cases}
            2g_{K_{(n)}}(\ell) & \text{ if $E/K$ has good reduction at $v$ and $E(K_v)[p]\neq0$,}\\
            g_{K_{(n)}}(\ell) & \text{ if $E/K$ has split multiplicative reduction at $v$,}\\
            0 & \text{ otherwise.}
        \end{cases}
    \end{align*}
\end{proposition}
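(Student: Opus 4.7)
The strategy is to partition the sum according to whether $\chi$ is ramified at $\ell$. Ramified characters contribute zero since $\chi(\ell)=0$ as a Dirichlet character, triggering case (iii) of Definition \ref{long definition}. The unramified characters of $\op{Gal}(K/\Q)$ are precisely characters of the quotient $\op{Gal}(K/\Q)/I_v$, where $I_v$ is the inertia at $v$; for such $\chi$ we have $\chi(\ell)=\chi(\sigma_v)$ with $\sigma_v$ the Frobenius, of order equal to the residue degree $f_v$ of $v$ over $\ell$.

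The key counting observation is that since $p\nmid[K:\Q]$, all values $\chi(\ell)$ are roots of unity of order prime to $p$, so reduction modulo $\pi$ is injective on them, and the congruences of Definition \ref{long definition} become equalities of roots of unity. For any prime-to-$p$ root of unity $\xi$, the number of unramified $\chi$ with $\chi(\sigma_v)=\xi$ is $g_K(\ell)$ if $\xi^{f_v}=1$ and $0$ otherwise, since we are counting characters of $\op{Gal}(K/\Q)/I_v$ taking a prescribed value at a specific element of order $f_v$. Moreover, $p\nmid[K:\Q]$ forces $n_K=0$, so $K$ and $\Q_{(n)}$ are linearly disjoint; combined with the fact that $\Q_{(n)}/\Q$ is unramified at $\ell\ne p$ with $p$-power residue degree while $f_K(\ell)$ is prime to $p$, this yields the multiplicativity
\[
g_{K_{(n)}}(\ell)=g_K(\ell)\,g_{\Q_{(n)}}(\ell).
\]

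The proposition then follows by case analysis on the reduction type of $E/\Q$ at $\ell$, tracking the reduction type of $E/K$ at $v$ under hypothesis (Add). For additive reduction we have $a_\ell=0$ and (Add) preserves additivity at $v$, while case (ii) of Definition \ref{long definition} is never satisfied by a root of unity, giving total $0$. For multiplicative reduction, $a_\ell=\pm 1$, and $\chi(\sigma_v)=a_\ell$ is solvable iff $a_\ell^{f_v}=1$, producing $g_K(\ell)$ characters each contributing $g_{\Q_{(n)}}(\ell)$; one checks that solvability corresponds exactly to $E/K$ having split multiplicative reduction at $v$, because for $E/\Q$ non-split this requires $f_v$ even so that the residue field contains $\F_{\ell^2}$ and splits the two tangent branches. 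For good reduction, write $a_\ell=\alpha+\beta$ with $\alpha\beta=\ell\equiv 1\pmod p$; the condition becomes $\chi(\ell)\in\{\bar\alpha,\bar\beta\}$. The subcases $\bar\alpha\ne\bar\beta$ and $\bar\alpha=\bar\beta=\pm 1$ (the latter forcing $a_\ell\equiv\pm 2\pmod\pi$) both give a total of $2g_K(\ell)\,g_{\Q_{(n)}}(\ell)=2g_{K_{(n)}}(\ell)$, via either $2g_K(\ell)$ characters of weight $g_{\Q_{(n)}}(\ell)$ or $g_K(\ell)$ characters of weight $2g_{\Q_{(n)}}(\ell)$. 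Nonvanishing requires $\bar\alpha^{f_v}=1$, equivalent to $p\mid\#\tilde E(\F_{\ell^{f_v}})=\ell^{f_v}+1-a_v$ (since $\ell^{f_v}\equiv 1\pmod p$), hence to $E(K_v)[p]\ne 0$ via the reduction isomorphism on prime-to-$\ell$ torsion. The chief subtlety is the uniform matching in the good-reduction case, where the two subcases of Definition \ref{long definition}(i) distribute a factor of $2$ differently (between multiplicity and per-character weight) yet yield the same total, together with the non-split-to-split multiplicative transition governed by the parity of $f_v$.
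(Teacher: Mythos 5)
Your proof is correct and follows essentially the same route as the paper's: reduce to counting unramified characters with a prescribed Frobenius value (where the paper cites \cite[Theorem 3.7]{washington2012introduction}, you prove the count directly via the quotient by inertia), use $\ell\equiv 1\pmod{p}$ to turn the congruences of Definition \ref{long definition} into conditions on the roots of $x^2-a_\ell x+1$ modulo $\pi$, invoke the same multiplicativity $g_{K_{(n)}}(\ell)=g_K(\ell)\,g_{\Q_{(n)}}(\ell)$, and conclude by the identical case analysis on reduction type, including the parity-of-$f_v$ criterion in the non-split multiplicative case and the $\pm2$ double-root bookkeeping in the good-reduction case. The only substantive difference is that you prove inline, via $\#\tilde{E}(\F_{\ell^{f_v}})\equiv(1-\bar\alpha^{f_v})(1-\bar\alpha^{-f_v})\pmod{p}$ and the reduction isomorphism on prime-to-$\ell$ torsion, the equivalence $\bar\alpha^{f_v}=1\Leftrightarrow E(K_v)[p]\neq 0$, which the paper simply cites from \cite[Lemma~3.4]{MATSUNO200080}.
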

\begin{proof}
For each $\chi\in\widehat{\op{Gal}(K/\Q)}$, write 
\[
\chi(\ell)\bmod\pi=\zeta_\chi\in\overline{\F}_p^\times.
\]
Since $p\nmid[K:\Q]$, the order of $\zeta_{\chi}$ coincides with that of $\chi(\ell)$. We treat separately the cases $\ell\nmid N$ and $\ell\mid N$. Note that $\ell\neq p$ since it has been assumed in particular that $\ell\equiv 1\pmod{p}$. 
\par First assume that $\ell\nmid N$. In this case $E$ has good reduction at $\ell$, hence at every $v\mid\ell$ of $K$. By the definition of $g_{\chi,n}(\ell)$, the quantity $g_{\chi,n}(\ell)$ is nonzero precisely when the congruence
\begin{equation}\label{al congruence}
a_\ell \equiv \chi(\ell) + \chi(\ell)^{-1} \pmod{\pi}
\end{equation}
holds, and equals $g_{\Q_{(n)}}(\ell)$ except in the case $a_\ell\equiv \pm 2 \pmod{\pi}$, in which case it equals $2g_{\Q_{(n)}}(\ell)$. Since $\ell\equiv1\pmod p$, the above congruence \eqref{al congruence} becomes
\[
a_\ell \equiv \zeta_\chi+\zeta_\chi^{-1}\pmod \pi.
\]
\noindent Let $\alpha\in\overline{\F}_p^\times$ be one of the two roots of the polynomial
\[
x^2-a_\ell x+1 \pmod p.
\]
The other root is $\alpha^{-1}$. The congruence $a_\ell \equiv \zeta_\chi+\zeta_\chi^{-1}\pmod p$ holds if and only if $\zeta_\chi=\alpha$ or $\zeta_\chi=\alpha^{-1}$. Therefore
\[
\sum_{\chi} g_{\chi,n}(\ell)
=
g_{\Q_{(n)}}(\ell)\bigl(\,|\{\chi:\zeta_\chi=\alpha\}|+|\{\chi:\zeta_\chi=\alpha^{-1}\}|\,\bigr),
\]
with the understanding that if $a_\ell\equiv\pm2\pmod p$ then each contributing term equals $2g_{\Q_{(n)}}(\ell)$, but the counting argument is identical.

The quantity $|\{\chi:\zeta_\chi=\alpha\}|$ is the number of characters $\chi$ of $\op{Gal}(K/\Q)$ satisfying $\chi(\ell)\equiv\alpha\pmod\pi$. A standard result (see \cite[Theorem 3.7]{washington2012introduction}) states that if $f$ is the inertial degree of $\ell$ in $K/\Q$, then
\[
|\{\chi:\zeta_\chi=\alpha\}|=\begin{cases}
g_K(\ell) & \text{if }\alpha^f=1,\\[4pt]
0 & \text{otherwise.}
\end{cases}
\]
The same statement holds for $\alpha^{-1}$, since $\alpha^{-1\,f}=1$ if and only if $\alpha^f=1$.

Hence
\[
\sum_{\chi} g_{\chi,n}(\ell)=
\begin{cases}
2g_{\Q_{(n)}}(\ell)g_K(\ell) & \text{if }\alpha^f=1,\\[4pt]
0 &\text{otherwise.}
\end{cases}
\]
Since $p\nmid [K:\Q]$, we have $g_{K_{(n)}}(\ell)=g_{\Q_{(n)}}(\ell)g_K(\ell)$, yielding
\[
\sum_{\chi} g_{\chi,n}(\ell)=
\begin{cases}
2g_{K_{(n)}}(\ell) & \text{if }\alpha^f=1,\\
0 &\text{if }\alpha^f\neq 1.
\end{cases}
\]

Finally, good reduction of $E/K$ at $v$ together with the condition $E(K_v)[p]\neq 0$ is equivalent to the condition $\alpha^f=1$; see \cite[Lemma~3.4]{MATSUNO200080}.
\par Next, assume that $\ell\mid N$. In this setting the definition of $g_{\chi,n}(\ell)$ involves only the congruence
\[
a_\ell\equiv \chi(\ell)\pmod\pi.
\]
Put $\alpha:=a_\ell\bmod p\in\overline{\F}_p$. Then $g_{\chi,n}(\ell)=g_{\Q_{(n)}}(\ell)$ if and only if $\zeta_\chi=\alpha$ and equals $0$ otherwise. By the same character-counting result,
\[
|\{\chi:\zeta_\chi=\alpha\}|=\begin{cases}
g_K(\ell) & \text{if }\alpha^f=1,\\
0 & \text{if }\alpha^f\neq1.
\end{cases}
\]
Thus
\[
\sum_{\chi} g_{\chi,n}(\ell)
=
\begin{cases}
g_{\Q_{(n)}}(\ell)\,g_K(\ell)=g_{K_{(n)}}(\ell) & \text{if }\alpha^f=1,\\[4pt]
0 & \text{otherwise.}
\end{cases}
\]

It remains to interpret the condition $\alpha^f=1$ in terms of the reduction type of $E/K$ at $v$.  
If $E$ has split multiplicative reduction at $\ell$, then $a_\ell=1$, so $\alpha=1$ and therefore $\alpha^f=1$ for all $f$. In this case $E/K$ has split multiplicative reduction at $v$. If $E$ has non-split multiplicative reduction at $\ell$, then $a_\ell=-1$, so $\alpha=-1$; hence $\alpha^f=(-1)^f$, which equals $1$ if and only if $f$ is even. But $E$ becomes split multiplicative over the unramified quadratic extension of $\Q_\ell$, which implies that $f$ is even exactly when $E/K$ has split multiplicative reduction at $v$.

Finally, if $E$ has additive reduction at $\ell$, then $a_\ell\equiv 0\pmod p$, so $\alpha=0$; and since $E$ has additive reduction at all primes of $K$ above $\ell$ by assumption \textbf{(Add)}, we have $g_{\chi,n}(\ell)=0$ for all $\chi$ by the definition. Thus the sum is $0$, which matches the statement of the proposition.

Putting these cases together completes the proof.
\end{proof}

\begin{proposition}[Generalization to cuspidal eigenforms]\label{prop: sum g_chi higher weight}
Let $K/\Q$ be an abelian extension, let $\ell$ be a rational prime, and let $f$ denote the residue degree of $\ell$ in $K/\Q$. Let 
\[
h(q)=\sum_{n\ge1} a_n(h) q^n \in S_k(\Gamma_0(N))
\]
be a cuspidal normalized eigenform of weight $k\ge 2$ with nebentypus~$\beta$. Assume $p\nmid [K:\Q]$ and $\ell\equiv1\pmod p$. Then
\[
\sum_{\chi\in\widehat{\op{Gal}(K/\Q)}} g_{\chi,n}(\ell)
=
\begin{cases}
2\, g_{K_{(n)}}(\ell) & \text{if }\ell\nmid N \text{ and } \alpha^f=1,\\[4pt]
g_{K_{(n)}}(\ell) & \text{if }\ell\mid N \text{ and } \alpha^f=1,\\[4pt]
0 & \text{otherwise,}
\end{cases}
\]
where \(\alpha\in\overline{\F}_p\) is any root of
\[
x^2 - a_\ell(h)\,x + \beta(\ell)\quad\pmod p.
\]
\end{proposition}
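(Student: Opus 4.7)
The plan is to reduce the proposition to the same three-case analysis used in the proof of Proposition~\ref{prop: sum over g_chi}, with the Hecke polynomial $x^2 - a_\ell(h)x + \beta(\ell)\ell^{k-2}$ of $h$ at $\ell$ replacing the polynomial $x^2 - a_\ell x + 1$ of an elliptic curve. Two ingredients are needed: a generalization of the local Euler factor $h_\ell(\psi,T)$ analyzed in Lemma~\ref{lem: pi n compatibility} and Proposition~\ref{prop: lambda inv from euler factor} to the nebentypus setting, and a reinterpretation of the nonvanishing conditions of Definition~\ref{long definition} as conditions on the roots of that Hecke polynomial modulo $p$.

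First I would rederive the compatibility of Lemma~\ref{lem: pi n compatibility} for $h$. Since the Hecke equivariance relation~\eqref{eqn: Hecke relation for modsymbs} for $h$ involves $\beta(\ell)\ell^{k-2}$ instead of $\epsilon_N(\ell)\ell^{k-2}$ (these agree for trivial nebentypus, and both vanish when $\ell\mid N$), the local Euler factor becomes
\[
h_\ell(\psi,T) \;=\; a_\ell(h) \;-\; \psi(\ell)(1+T)^{t_n(\ell)} \;-\; \beta(\ell)\ell^{k-2}\psi(\ell)^{-1}(1+T)^{-t_n(\ell)}.
\]
Expanding modulo $(\pi, T^{3p^a})$ exactly as in the proof of Proposition~\ref{prop: lambda inv from euler factor} reproduces the same trichotomy governing $\lambda(h_\ell(\psi,T))$, with the constant term $A = a_\ell(h) - \psi(\ell) - \beta(\ell)\ell^{k-2}\psi(\ell)^{-1}$ and the $T^{p^a}$-coefficient $B = \beta(\ell)\ell^{k-2}\psi(\ell)^{-1} - \psi(\ell)$; the degenerate subcase $A \equiv B \equiv 0 \pmod \pi$ is now $a_\ell(h)^2 \equiv 4\beta(\ell)\ell^{k-2} \pmod \pi$, i.e. the Hecke polynomial has a double root. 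This yields the natural extension of Definition~\ref{long definition} to $h$.

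Next, I would reinterpret the condition $A \equiv 0 \pmod \pi$ in terms of the Hecke polynomial. Using $\ell \equiv 1 \pmod p$ to replace $\ell^{k-2}$ by $1$ modulo $\pi$, and writing $\zeta_\chi := \chi(\ell) \bmod \pi$, this rearranges to
\[
\zeta_\chi^2 \;-\; a_\ell(h)\,\zeta_\chi \;+\; \beta(\ell) \;\equiv\; 0 \pmod p,
\]
which is precisely the claim that $\zeta_\chi$ is a root of $x^2 - a_\ell(h)x + \beta(\ell) \pmod p$. When $\ell\nmid N$ the two roots $\alpha$ and $\beta(\ell)\alpha^{-1}$ each contribute; when $\ell\mid N$ one has $\beta(\ell)=0$, the polynomial degenerates to $x(x-a_\ell(h))$, and since $\zeta_\chi$ is a root of unity, only $\alpha = a_\ell(h)$ is admissible. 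Applying \cite[Theorem~3.7]{washington2012introduction} to count the number of $\chi \in \widehat{\op{Gal}(K/\Q)}$ with $\chi(\ell) \equiv \alpha \pmod \pi$ (which equals $g_K(\ell)$ if $\alpha^f = 1$ and $0$ otherwise), and using $p \nmid [K:\Q]$ to factor $g_{K_{(n)}}(\ell) = g_{\Q_{(n)}}(\ell)\, g_K(\ell)$, the three values $2 g_{K_{(n)}}(\ell)$, $g_{K_{(n)}}(\ell)$, and $0$ fall out exactly as claimed.

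The main delicate point is the bookkeeping in the double-root subcase of $\ell \nmid N$: there $\alpha = \beta(\ell)\alpha^{-1}$ collapses the two character-counts into one, halving the naive total, but the generalized Definition~\ref{long definition}(i) compensates by assigning the value $2g_{\Q_{(n)}}(\ell)$ rather than $g_{\Q_{(n)}}(\ell)$ to each contributing $\chi$, so that the final sum remains $2g_{K_{(n)}}(\ell)$. Verifying that this compensation matches precisely in every subcase is the only non-routine check; the remainder of the argument is a direct transcription of the proof of Proposition~\ref{prop: sum over g_chi}, with the reduction-type dichotomy at $v$ replaced by the purely arithmetic condition $\alpha^f = 1$.
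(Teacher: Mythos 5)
Your proposal is correct and follows essentially the same route as the paper's proof: reduce mod $p$ via $\ell\equiv 1\pmod p$ to the Hecke polynomial $x^2-a_\ell(h)x+\beta(\ell)$, characterize the contributing $\chi$ by $\zeta_\chi\in\{\alpha,\beta(\ell)\alpha^{-1}\}$, count them with \cite[Theorem~3.7]{washington2012introduction} together with $g_{K_{(n)}}(\ell)=g_{\Q_{(n)}}(\ell)\,g_K(\ell)$, and observe that the doubling in the double-root (supersingular) congruence exactly compensates the collapse of the two root-counts. The only difference is presentational: you explicitly rederive the nebentypus version of the Euler factor $h_\ell(\psi,T)$ and the attendant extension of Definition~\ref{long definition}, which the paper leaves implicit, and this added care is harmless.
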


\begin{proof}
If \(\ell\nmid N\) the local Euler factor of \(h\) is
\[
1-a_\ell(h)X+\ell^{\,k-1}\beta(\ell)X^2,
\]
which reduces modulo \(p\) (using \(\ell\equiv1\pmod p\)) to
\[
1-a_\ell(h)X+\beta(\ell)X^2.
\]
Let \(\alpha\) be a root of \(x^2-a_\ell(h)x+\beta(\ell)\) in \(\overline{\F}_p\); the other root equals \(\beta(\ell)\alpha^{-1}\). By definition \(g_{\chi,n}(\ell)\) is nonzero exactly when
\[
a_\ell(h)\equiv \chi(\ell)+\ell^{k-1}\beta(\ell)\chi(\ell)^{-1}\pmod p,
\]
which (since \(\ell\equiv1\pmod p\)) is equivalent to
\[
a_\ell(h)\equiv \zeta_\chi+\beta(\ell)\zeta_\chi^{-1}\pmod p,
\]
where \(\zeta_\chi:=\chi(\ell)\bmod\pi\). Thus \(g_{\chi,n}(\ell)\neq0\) precisely when \(\zeta_\chi\in\{\alpha,\beta(\ell)\alpha^{-1}\}\), and the contribution of each such \(\chi\) equals \(g_{\Q_{(n)}}(\ell)\) (with the usual doubling in the exceptional supersingular congruence).

Because \(\beta(\ell)\in\F_p^\times\) is a fixed scalar, multiplication by \(\beta(\ell)^{-1}\) gives a bijection on the character values at \(\ell\):
\(\chi(\ell)\mapsto\beta(\ell)^{-1}\chi(\ell)\). Hence counting characters with \(\chi(\ell)=\beta(\ell)\alpha^{-1}\) is equivalent to counting those with value \(\alpha^{-1}\); the counts are unchanged. By the character-counting result for abelian extensions (see \cite[Th.~3.7]{washington2012introduction}), if \(f\) is the residue degree of \(\ell\) in \(K/\Q\) then
\[
|\{\chi:\zeta_\chi=\alpha\}|=
\begin{cases}
g_K(\ell)&\text{if }\alpha^f=1,\\
0&\text{otherwise},
\end{cases}
\]
and similarly for the other root. Using \(p\nmid[K:\Q]\) so that \(g_{K_{(n)}}(\ell)=g_{\Q_{(n)}}(\ell)g_K(\ell)\), we obtain the first displayed case.

If \(\ell\mid N\) the defining congruence for a nonzero \(g_{\chi,n}(\ell)\) is simply \(a_\ell(h)\equiv\chi(\ell)\pmod\pi\); put \(\alpha=a_\ell(h)\bmod p\). The same character-counting gives the second displayed case, and all remaining cases give zero by definition. This completes the proof.
\end{proof}

\begin{lemma}\label{lemma: reduction to prime to p ext degree}
    Let $M, K$ and $L$ be abelian extensions of $\Q$ satisfying \textbf{(Add)} and $M\subset K \subset L$ with $[L:M]$ a power of $p$. Further, assume that $p\nmid [M:\Q]$ and $n\geq \op{ord}_p([L:\Q])$. If the assertion of Theorem \ref{thm: kida formula!} holds for the extensions $L/M$ and $K/M$, it also holds for $L/K$. 
\end{lemma}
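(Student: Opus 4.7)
The plan is to derive the Kida formula for $L/K$ by applying the theorem (as a hypothesis) to the two extensions $L/M$ and $K/M$ and subtracting to cancel the $M$-contribution. This is the standard tower reduction strategy, analogous to arguments in \cite{MATSUNO200080} and \cite{Hachimori1999581}.

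\textbf{Step 1 ($\mu$-propagation down to $M$).} I first need to show that the hypothesis $\mu(\Theta_{n+n_L-n_K}(E/K))=0$ implies $\mu(\Theta_{n+n_L-n_M}(E/M))=0$, since the $L/M$ formula requires the latter as input. By Definition \ref{defn: MT elements over K}, both Mazur--Tate elements factor, after the substitutions $T'=(1+T)^{p^{n_K}}-1$ (resp.\ $(1+T)^{p^{n_M}}-1$), as products of twisted level $n+n_L$ Mazur--Tate elements $\Theta_{n+n_L}^{M_\psi}(f,\psi,T)$ indexed by characters $\psi$ of $\op{Gal}(K/\Q)$ (resp.\ $\op{Gal}(M/\Q)$). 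Since characters of $\op{Gal}(M/\Q)$ pull back to a subset of characters of $\op{Gal}(K/\Q)$, the product for $M$ is a factor of the product for $K$ inside $\Lambda_{n+n_L}$. The substitutions of the form $T \mapsto (1+T)^{p^{e}}-1$ preserve the $\mu=0$ condition (the mod-$\pi$ reduction is still a nonzero polynomial), so $\mu=0$ lifts to the $K$-product. Crucially, $\Lambda_{n+n_L}/(\pi)$ is a \emph{local} commutative ring, so a nonzero product has each factor nonzero; this yields $\mu=0$ for the sub-product and hence for $\Theta_{n+n_L-n_M}(E/M)$.

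\textbf{Step 2 (Subtraction in the tower).} The hypotheses of Theorem \ref{thm: kida formula!} are satisfied for both $L/M$ (at level $n$) and $K/M$ (at level $n+n_L-n_K$): both are $p$-extensions of abelian fields satisfying \textbf{(Add)}, the condition $p\nmid[M:\Q]$ ensures \textbf{(K-p)}(1), and $n\ge\op{ord}_p([L:\Q])$ supplies the required level bound in each case. Applying the theorem gives
\begin{align*}
\lambda(\Theta_n(E/L)) &= [L_\infty:M_\infty]\,\lambda(\Theta_{n+n_L-n_M}(E/M)) + R_{L/M},\\
\lambda(\Theta_{n+n_L-n_K}(E/K)) &= [K_\infty:M_\infty]\,\lambda(\Theta_{n+n_L-n_M}(E/M)) + R_{K/M},
\end{align*}
where $R_{\bullet}$ denotes the ramification sum appearing in the theorem. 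Multiplying the second equation by $[L_\infty:K_\infty]$ and subtracting from the first, and using the tower identity $[L_\infty:M_\infty]=[L_\infty:K_\infty]\cdot[K_\infty:M_\infty]$, the $M$-term cancels to give
\[
\lambda(\Theta_n(E/L)) = [L_\infty:K_\infty]\,\lambda(\Theta_{n+n_L-n_K}(E/K)) + \bigl(R_{L/M}-[L_\infty:K_\infty]R_{K/M}\bigr).
\]
The vanishing $\mu(\Theta_n(E/L))=0$ is immediate from the $L/M$ formula once Step~1 is in place.

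\textbf{Step 3 (Tower identity for the ramification sums).} It remains to verify $R_{L/M}-[L_\infty:K_\infty]R_{K/M}=R_{L/K}$. The lower bound $n\ge\op{ord}_p([L:\Q])$ ensures that $L_{(n)}/M_{(n+n_L-n_M)}$, $K_{(n+n_L-n_K)}/M_{(n+n_L-n_M)}$ and $L_{(n)}/K_{(n+n_L-n_K)}$ are all totally ramified at every prime $w\nmid p$. Consequently the inertial degrees $f(w)$ are all $1$; moreover, totally ramified extensions preserve the residue field and hence the reduction type and local $p$-torsion, yielding prime-to-prime bijections $P_i^L \leftrightarrow P_i^K \leftrightarrow P_i^M$ for $i=1,2$. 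Writing $N:=|P_1|+2|P_2|$ for the common count, each ramification sum collapses to $N\cdot(e-1)$ with $e$ the total degree of the relevant extension, and the identity reduces to
\[
([L_\infty:M_\infty]-1)-[L_\infty:K_\infty]\bigl([K_\infty:M_\infty]-1\bigr)=[L_\infty:K_\infty]-1,
\]
which is immediate from $[L_\infty:M_\infty]=[L_\infty:K_\infty][K_\infty:M_\infty]$.

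\textbf{Main obstacle.} The delicate point is Step~1: the downward $\mu$-propagation from $K$ to $M$ is not given by the theorem itself (which only propagates $\mu=0$ upward). It must instead be extracted from the explicit product decomposition of Definition~\ref{defn: MT elements over K}, and this requires the observation that the mod-$\pi$ reduction of $\Lambda_{n+n_L}$ is a local ring so that a nonzero product forces each factor nonzero. Once this is secured, the rest of the argument reduces to the tower arithmetic of totally ramified extensions, which is clean thanks to the hypothesis $n\ge\op{ord}_p([L:\Q])$.
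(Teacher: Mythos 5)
Your overall skeleton is the paper's own: apply the assumed instances of Theorem \ref{thm: kida formula!} to $L/M$ (level $n$) and $K/M$ (level $n+n_L-n_K$, with $n_M=0$ since $p\nmid[M:\Q]$), multiply the second by $[L_\infty:K_\infty]$ and subtract, so Step 2 is exactly right; and your Step 1 (downward $\mu$-propagation from $K$ to $M$ via the product decomposition of Definition \ref{defn: MT elements over K} and the locality of $\Lambda_{n+n_L}/(\pi)\cong \F_q[T]/(T^{p^{n+n_L}})$) is a correct and genuinely useful addition, since the paper's proof passes over this point in silence. However, Step 3 contains a real error. The hypothesis $n\geq \op{ord}_p([L:\Q])$ does \emph{not} make the extensions $L_{(n)}/M_{(n+n_L)}$, $K_{(n+n_L-n_K)}/M_{(n+n_L)}$ and $L_{(n)}/K_{(n+n_L-n_K)}$ totally ramified at every prime $w\nmid p$; it only forces the inertial degrees to equal $1$, because the unramified part of the local extension at $\ell\neq p$ is absorbed into the cyclotomic direction (the cyclotomic $\Z_p$-extension of $\Q_\ell$ is the unramified one). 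Splitting genuinely occurs: take $M=\Q$, $K\subset\Q(\mu_{\ell_1})$ of degree $p$, and $L=KK'$ with $K'\subset\Q(\mu_{\ell_2})$ of degree $p$; then the primes above $\ell_1$ have $e=1$ and $g=p$ in $L_{(n)}/K_{(n)}$, while $e_{L_{(n)}/\Q_{(n+n_L)}}(w)=p\neq[L_\infty:\Q_\infty]=p^2$. So there is no prime-to-prime bijection $P_i^L\leftrightarrow P_i^K\leftrightarrow P_i^M$, the ramification indices vary from prime to prime within a single extension, and your collapse of each ramification sum to $N\cdot(e-1)$ with $e$ the total degree is false; your closing numerical identity only verifies the formula in the atypical everywhere-totally-ramified situation.

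The identity $R_{L/M}-[L_\infty:K_\infty]R_{K/M}=R_{L/K}$ that you need is nevertheless true, but it must be established locally, which is how the paper argues: fix a prime $v\nmid p$ of $K_{(n+n_L-n_K)}$ and let $g$ be the number of primes $w$ of $L_{(n)}$ above $v$. Since $f_{L_{(n)}/K_{(n+n_L-n_K)}}(w)=1$, one has $[L_\infty:K_\infty]=e_{L_{(n)}/K_{(n+n_L-n_K)}}(w)\cdot g$, and multiplicativity of ramification in the tower, $e_{L_{(n)}/M_{(n+n_L)}}(w)=e_{L_{(n)}/K_{(n+n_L-n_K)}}(w)\,e_{K_{(n+n_L-n_K)}/M_{(n+n_L)}}(v)$, gives
\[
[L_\infty:K_\infty]\bigl(e_{K_{(n+n_L-n_K)}/M_{(n+n_L)}}(v)-1\bigr)
=\sum_{w\mid v}\bigl(e_{L_{(n)}/M_{(n+n_L)}}(w)-1\bigr)-\sum_{w\mid v}\bigl(e_{L_{(n)}/K_{(n+n_L-n_K)}}(w)-1\bigr),
\]
where the sums run over the primes $w$ of $L_{(n)}$ above $v$. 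Here the condition $f=1$ is what you should use in place of total ramification: it guarantees the residue fields at $w$ and $v$ agree, so the reduction type and the local $p$-torsion condition defining $P_1$ and $P_2$ match between $w$ and $v$ and the weights $\delta_w=\delta_v$ are compatible. Summing this identity over all $v\nmid p$ with those weights yields the cancellation you asserted, and with Step 3 repaired in this way the remainder of your argument goes through.
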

\begin{proof}
Note that $p\nmid[M:\Q]$ implies $M\cap \Q_{\infty}=\Q$, so $n_{M}=0$. For the $\mu$-invariant, suppose that $\mu(\Theta_{n+n_L-n_K}(E/K))=0$. By Remark \ref{remark: sum decomp for iwasawa invs f/K}, this implies $\mu^{M_\psi}_{n+n_L}(\psi)=0$ for all $\psi \in \widehat{\op{Gal}(K/\mathbb{Q})}$. This implies $\mu^{M_\psi}_{n+n_L}(\psi)=0$ for $\psi \in \widehat{\op{Gal}(M/\mathbb{Q})}$, and hence $\mu(\Theta_{n+n_L}(E/M))=0$. (recall, $M_\psi$ denotes the prime-to-$p$ conductor of $\psi$, not to be confused with the extension $M$). Then, from Theorem \ref{thm: kida formula!} for $L/M$, it follows that $\mu(\Theta_{n}(E/L))=0$.
For the $\lambda$-invariant: if Theorem \ref{thm: kida formula!} holds for $L/M$ and $K/M$, we have for all $n\geq 1$:
\begin{equation}\label{eq: L to M}
    \begin{split}
    \lambda(\Theta_{n}(E/L))&=[L_\infty:M_\infty]\lambda(\Theta_{n+n_L}(E/M))+ \sum_{w\in P_1} (e_{L_{(n)}/M_{(n+n_L)}}(w)-1)\\ &\qquad +2\sum_{w\in P_2}(e_{L_{(n)}/M_{(n+n_L)}}(w)-1) \text{ and }
\end{split}
\end{equation}
\begin{equation}
    \begin{split}\label{eq: K to M}
     \lambda(\Theta_{n}(E/K))&=[K_\infty:M_\infty]\lambda(\Theta_{n+n_K}(E/M))+ \sum_{w\in P_1} (e_{K_{(n)}/M_{(n+n_K)}}(w)-1)\\ &\qquad +2\sum_{w\in P_2} (e_{K_{(n)}/M_{(n+n_K)}}(w)-1).
\end{split}
\end{equation}
Note that $f_{L_{(n)}/M_{(n+n_L)}}(w)=1$ and $f_{K_{(n)}/M_{(n+n_K)}}(w)=1$ since $n\geq \op{ord}_p([L:\Q])$.
Since \[[L_\infty:M_\infty]=[L_\infty:K_\infty][K_\infty:M_\infty],\] from \eqref{eq: K to M} we obtain 
\begin{equation}\label{eq: L to K}
    \begin{split}
[L_\infty:K_\infty]\lambda(\Theta_{n+n_L-n_K}(E/K))=&[L_\infty:M_\infty]\lambda(\Theta_{n+n_L}(E/M))\\+& [L_\infty:K_\infty]\sum_{w\in P_1} (e_{K_{(n+n_L-n_K)}/M_{(n+n_L)}}(w)-1)\\  +&2[L_\infty:K_\infty]\sum_{w\in P_2} (e_{K_{(n+n_L-n_K)}/M_{(n+n_L)}}(w)-1).
\end{split}
\end{equation}
Let $v$ be a prime of $K_{(n+n_L-n_K)}$ not lying above $p$ and let $g$ denote the number of primes of $L_{(n)}$ lying above $v$. For any prime $w$ of $L_{(n)}$ lying above $v$, we have \[[L_\infty:K_\infty]=[L_{(n)}:K_{(n+n_L-n_K)}]=e_{L_{(n)}/K_{(n+n_L-n_K)}}(w)g.\] 
This gives us 
\[\begin{split}
&[L_\infty:K_\infty](e_{K_{(n+n_L-n_K)}/M_{(n+n_L)}}(v)-1)\\= &\sum_w (e_{L_{(n)}/M_{(n+n_L)}}(w)-1) -\sum_w (e_{L_{(n)}/K_{(n+n_L-n_K)}}(w)-1)
\end{split}\]
where the sum is over all primes $w$ of $L_{(n)}$ lying over $v$. Then, substituting the expression for $\lambda(\Theta_{n+n_L}(E/M))$ from Equation \eqref{eq: L to K} into Equation \eqref{eq: L to M} gives the desired result.
\end{proof}

\begin{proof}[Proof of Theorem \ref{thm: kida formula!}]
From Lemma \ref{lemma: reduction to prime to p ext degree}, it suffices to prove Theorem \ref{thm: kida formula!} for the case when $p\nmid [K:\Q]$. Let $L'$ denote the maximal subfield of $L$ such that $[L':\Q]$ is a power of $p$. By the assumption $p\nmid[K:\Q]$, we have $n_K=0$ and $K\cap L'=\Q$. Thus, any character on $\op{Gal}(L/\Q)$ decomposes uniquely as a product of a character $\psi$ on $\op{Gal}(K/\Q)$ and a character $\chi$ on $\op{Gal}(L'/\Q)$. 
\begin{figure}[h]
    \centering
\[
\begin{tikzcd}[column sep=large, row sep=large]
    {L_{(n)}} \\
    & L \\
    {K_{(n+n_L)}} && {L'} \\
    & K \\
    && {\mathbb{Q}}
    \arrow[dashed, no head, from=2-2, to=1-1]
    \arrow[no head, from=3-1, to=1-1]
    \arrow[no head, from=3-3, to=2-2]
    \arrow["\chi"{description}, bend left=35, no head, from=3-3, to=5-3]
    \arrow[no head, from=4-2, to=2-2]
    \arrow[dashed, no head, from=4-2, to=3-1]
    \arrow[no head, from=5-3, to=3-3]
    \arrow[no head, from=5-3, to=4-2]
    \arrow["{\psi}"{description, pos=0.6}, bend left=25, no head, from=5-3, to=4-2]
\end{tikzcd}
\]
 \caption{}
    \label{fig:placeholder2}
\end{figure}
For a character $\kappa$, let $M_{\kappa}$ denote the non-$p$ part of its conductor, i.e., the conductor of $\kappa$ is of the form $p^{n'}M_\kappa$ for some $n'\geq0$ with $(p,M_\kappa)=1$.

Then, we have
    \begin{align*}
        p^{n_L}\lambda(\Theta_{n}(E/L))&= \sum_\chi\sum_\psi \lambda(\Theta^{M_{\chi\psi}}_{n+n_L}(\chi\psi)), \quad \lambda(\Theta_{n+n_L}(E/K))=\sum_{\psi}\lambda(\Theta_{n+n_L}^{M_\psi}(\psi))\\
        \mu(\Theta_n(E/L))&= \sum_\chi\sum_\psi \mu(\Theta^{M_{\chi\psi}}_{n+n_L}(\chi\psi)),  \quad \mu(\Theta_{n+n_L}(E/K))=\sum_{\psi}\mu(\Theta_{n+n_L}^{M_{\psi}}(\psi))
    \end{align*}
Since we have assumed $\mu(\Theta_n(E/K))=0$, we have $\mu(\Theta_{n+n_L}^{M_{\psi}}(\psi))=0$ for all $\psi$. Note that the order of $\psi$ is co-prime to $p$ and $\chi$ has $p$-power order, so $M_{\psi}\mid M_{\chi\psi}$ and $M_{\chi\psi}/M_\psi$ is squarefree (a character of $p$-power order must have squarefree prime-to-$p$ conductor). Thus, using Proposition \ref{prop: lambda inv from euler factor}, we have $\mu(\Theta_{n+n_L}^{M_{\chi\psi}}(\chi\psi))=0$ ($\chi$ has $p$-power order) and thus $\mu(\Theta_{n+n_L}(E/L))=0$.
For the $\lambda$-invariants, Proposition \ref{prop: lambda inv from euler factor} gives us
 \[\lambda(\Theta^{M_{\chi\psi}}_{n+n_L}(\chi\psi))= \lambda(\Theta^{M_{\chi\psi}}_{n+n_L}(\psi))= \lambda(\Theta^{M_{\psi}}_{n+n_L}(\psi))+ \sum_{\ell \mid M_{\chi\psi}/M_\psi} g_{\psi,n+n_L}(\ell)\]   
Note that if $\ell\mid M_{\psi}$, then $g_{\psi,n+n_L}(\ell)=0$ (as $\psi(\ell)=0$ for such $\ell$). Thus, we can take the sum over $\ell \mid M_{\chi\psi}$. We need to simplify the sum
\[\sum_{\chi}\sum_\psi\sum_{\ell \mid M_{\chi\psi}} g_{\psi,n+n_L}(\ell)\]
to get the final result.
Now if $\ell$ divides $M_{\chi\psi}$, then $\chi\psi(\ell)=0$. If $\chi(\ell)\neq0$ and $\ell\mid M_{\chi\psi}$, we must have $\psi(\ell)=0$, which in turn implies $g_{\psi,n+n_L}(\ell)=0$. So we only need to sum over those $\chi$ with $\chi(\ell)=0$. Furthermore, the outer summation can be taken over all $\ell\neq p$, since $\chi(\ell)\neq0$ for $\ell\nmid M_{\chi\psi}$. So we have
\begin{align*}
    \sum_{\ell \mid M_{\chi\psi}} g_{\psi,n+n_L}(\ell)&= \sum_{\ell\mid M_{\chi\psi}}\sum_\chi (\sum_\psi g_{\psi,n+n_L}(\ell))\\
    &=\sum_{\ell\neq p} |\{\chi: \chi(\ell)=0\}|\sum_\psi g_{\psi,n+n_L}(\ell).
\end{align*}
Putting all of this together, we have
\[p^{n_L}\lambda(\Theta_{n+n_L}(E/L))=[L':\Q]\lambda(\Theta_{n+n_L}(E/K))+ \sum_{\ell\neq p} |\{\chi: \chi(\ell)=0\}| \sum_{\psi}g_{\psi}(\ell).\]
From $L'\cap K=\Q$ and $L\cap K_{(n+n_L)}=K_{(n_L)}$ we get $[L':\Q]=p^{n_L}[L_{(n)}:K_{(n+n_L)}]=p^{n_L}[L_\infty:K_\infty]$. The prime $\ell\neq p$ is unramified in $\Q_\infty/\Q$, so $e_{L'/\Q}(\ell)=e_{L_{(n)}/K_{(n+n_L)}}(w)$ for any prime $w$ of $L_{(n)}$ lying above $\ell$. 
Then, we use \cite[Theorem 3.7]{washington2012introduction}, which implies that 
\[|\{\chi\in \widehat{\op{Gal}(L'/\Q)}: \chi(\ell)\neq 0\}| = [L':\Q]e_{L'/\Q}(\ell)^{-1}\] to get
\begin{align*}
    |\{\chi: \chi(\ell)=0\}|=&[L':\Q](1-e_{L'/\Q}(\ell)^{-1})\\
    =& p^{n_L}[L_\infty:K_\infty](1-e_{L_{(n)}/K_{(n+n_L)}}(w)^{-1})
\end{align*}
and this is clearly $0$ if $\ell$ does not ramify in $L'/\Q$. Observe that if $w$ is a prime of $L_{(n)}$ lying above $\ell$, then the reduction type of $E/L_{(n)}$ at $w$ is the same as that of $E/K_{(n+n_L)}$ at the prime of $K_{(n+n_L)}$ lying above $\ell$ since $L_{(n)}$ satisfies our assumption \textbf{(Add)} as $L_{(n)}/L$ is unramified. Then, Proposition \ref{prop: sum over g_chi} gives us $\sum_{\psi}g_{\psi,n+n_L}(\ell)=\delta_wg_{K_{(n+n_L)}}(\ell)$, where
\[\delta_w=\begin{cases}
    2 \text{ if } w\in P_2,\\
    1 \text{ if } w\in P_1,\\
    0 \text{ otherwise.}
\end{cases}\]
(Also note that Proposition \ref{prop: sum over g_chi} can be applied since in a Galois $p$-extension, if a prime $\ell\neq p$ ramifies, then $\ell\equiv1\pmod{p}$.) 
Hence, for each $\ell\neq p$, we have 
\begin{align*}
    |\{\chi: \chi(\ell)=0\}|\sum_\psi g_{\psi,n+n_L}(\ell)=& p^{n_L}[L_\infty:K_\infty]g_{K_{(n+n_L)}}(\ell)\delta_w(1-e_{L_{(n)}/K_{(n+n_L)}}(w)^{-1})\\
    =&p^{n_L}[L_\infty:K_\infty]g_{K_{(n+n_L)}}(\ell)g_{L_{(n)}}(\ell)^{-1}\sum_{w\mid l}\delta_w (1-e_{L_{(n)}/K_{(n+n_L)}}(w)^{-1}),
\end{align*}
where in the second equality, we write $g_{L_{(n)}}(\ell)\delta_w(1-e_{L_{(n)}/K_{(n+n_L)}}(w)^{-1}) = \sum_{w\mid \ell}\delta_w(1-e_{L_{(n)}/K_{(n+n_L)}}(w)^{-1})$ since $g_{L_{(n)}}(\ell)$ is the number of primes of $L_{(n)}$ lying over $\ell$ by definition. Now \[e_{L_{(n)}/K_{(n+n_L)}}(w)f_{L_{(n)}/K_{(n+n_L)}}(w)g_{L_{(n)}}(\ell)=[L_{(n)}:K_{(n+n_L)}]g_{K_{(n+n_L)}}(\ell),\] so we get 
\[\lambda(\Theta_{n}(E/L))=[L_\infty:K_\infty]\lambda(\Theta_{n+n_L}(E/K))+ \sum_{w\nmid p} \delta_wf_{L_{(n)}/K_{(n+n_L)}}(w)(e_{L_{(n)}/K_{(n+n_L)}}(w)-1)\]
which completes the proof.
\end{proof}
Note that the argument applies to the case of Hecke eigenforms $f \in S_k(\Gamma_0(N))$. In that case, one can remove the assumption \textbf{(Add)} since the sets of primes $P_1$ and $P_2$ of $L_{(n)}$ are defined in terms of the rational primes. In particular, for $w \in L_{(n)}$, let $\tilde{w}\in \Q$ denote the rational prime lying below $w$. Define the sets $P_1,P_2$ as 
    \begin{align*}
        P_1&:=\{w : w\nmid p, \tilde{w} \mid N \text{ and } \alpha_{\tilde{w}}^{f_{\tilde{w}}}\not\equiv 1\pmod{p}\},\\
        P_2&:=\{w :w\nmid p, \tilde{w}\nmid N, \text{ and } \alpha_{\tilde{w}}^{f_{\tilde{w}}}\equiv 1\pmod{p}\},
    \end{align*}
where $\alpha_{\tilde{w}}\in \overline{\F}_p$ denotes a root of $x^2-a_\ell(\tilde{w})x+\epsilon_N(\tilde{w})\pmod{p}$ and $f_{\tilde{w}}$ denotes the residue degree of $\tilde{w}$ in $K/\Q$. Thus, applying Proposition \ref{prop: sum g_chi higher weight} to $\sum_\psi g_{\psi, n+n_L}(\ell)$ for a character $\psi$ on $K/\Q$ as in the previous proof gives us the following:
\begin{theorem}\label{thm: modforms Kida formula}
Let $f \in S_k(\Gamma_0(N)$ be a Hecke eigenform. Let $n\geq 1$ and assume that the condition \textbf{(K-p)} is satisfied. If
    \[
        \mu(\Theta_{\,n+n_L-n_K}(f/K))=0,
    \]
    then $\mu(\Theta_n(f/L))=0$, and
    \[
    \begin{aligned}
        \lambda(\Theta_n(f/L)) 
        &= [L_\infty : K_\infty] \, \lambda(\Theta_{\,n+n_L-n_K}(f/K))
        + \sum_{w \in P_1} f_{L_{(n)}/K_{(n+n_L-n_K)}}(w)\bigl(e_{L_{(n)}/K_{(n+n_L-n_K)}}(w)-1\bigr) \\
        &\quad + 2 \sum_{w \in P_2} f_{L_{(n)}/K_{(n+n_L-n_K)}}(w)\bigl(e_{L_{(n)}/K_{(n+n_L-n_K)}}(w)-1\bigr).
    \end{aligned}
    \]
    where the sets $P_1, P_2$ are sets of primes of $L_{(n)}$ defined as above.
\end{theorem}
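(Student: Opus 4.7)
The plan is to follow the proof of Theorem~\ref{thm: kida formula!} essentially verbatim, substituting the higher-weight counting Proposition~\ref{prop: sum g_chi higher weight} for the elliptic-curve version Proposition~\ref{prop: sum over g_chi} at the decisive step. The hypothesis \textbf{(Add)} was used in the elliptic case only to translate the character-count of Proposition~\ref{prop: sum over g_chi} into a statement about reduction types of $E$ at primes of $L_{(n)}$. Since the sets $P_1$ and $P_2$ are here defined directly in terms of the rational prime $\tilde{w}$ below $w$ and the invariant $\alpha_{\tilde w}^{f_{\tilde w}} \bmod p$, no such translation is needed and the assumption can simply be dropped. I would begin by noting that Lemma~\ref{lemma: reduction to prime to p ext degree} carries over unchanged: its proof depends only on the fact that membership in $P_1, P_2$ is determined by $\tilde{w}$, which is built into the new definitions. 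This reduces the proof to the case $p \nmid [K:\Q]$.

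In that case, let $L'$ be the maximal $p$-subfield of $L$; since $p \nmid [K:\Q]$ we have $K \cap L' = \Q$, so every character of $\op{Gal}(L/\Q)$ factors uniquely as $\chi\psi$ with $\chi$ on $\op{Gal}(L'/\Q)$ of $p$-power order and $\psi$ on $\op{Gal}(K/\Q)$ of order prime to $p$. Expanding $p^{n_L}\lambda(\Theta_n(f/L))$ and $\lambda(\Theta_{n+n_L}(f/K))$ via Definition~\ref{defn: MT elements over K} as double sums of $\lambda$-invariants of twisted Mazur--Tate elements, Proposition~\ref{prop: lambda inv from euler factor} simultaneously propagates the vanishing of $\mu$ from $K$ to $L$ and yields
\[
\lambda\bigl(\Theta_{n+n_L}^{M_{\chi\psi}}(\chi\psi)\bigr) \;=\; \lambda\bigl(\Theta_{n+n_L}^{M_\psi}(\psi)\bigr) \;+\; \sum_{\ell \,\mid\, M_{\chi\psi}/M_\psi} g_{\psi,\,n+n_L}(\ell).
\]
Summing over all $\chi$ and $\psi$ and interchanging orders, the residual error collapses to $\sum_{\ell \ne p} |\{\chi : \chi(\ell) = 0\}| \cdot \sum_\psi g_{\psi,\,n+n_L}(\ell)$. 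The outer count is independent of weight: as in the elliptic case, $|\{\chi : \chi(\ell) = 0\}| = p^{n_L}[L_\infty:K_\infty]\bigl(1 - e_{L_{(n)}/K_{(n+n_L)}}(w)^{-1}\bigr)$ by \cite[Theorem~3.7]{washington2012introduction}, which vanishes unless $\ell$ ramifies in $L'/\Q$; and any such $\ell$ satisfies $\ell \equiv 1 \pmod p$ since $L'/\Q$ is a $p$-extension.

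For these $\ell$, Proposition~\ref{prop: sum g_chi higher weight} applies and evaluates the inner sum as $\delta_{\tilde w}\, g_{K_{(n+n_L)}}(\ell)$, with $\delta_{\tilde w} = 2$ in the $P_2$-case, $\delta_{\tilde w} = 1$ in the $P_1$-case, and $\delta_{\tilde w} = 0$ otherwise. The identity $e f g_{L_{(n)}}(\ell) = [L_{(n)}:K_{(n+n_L)}]\,g_{K_{(n+n_L)}}(\ell)$ then converts the rational-prime sum into the prescribed sum over primes $w$ of $L_{(n)}$, weighted by $f_{L_{(n)}/K_{(n+n_L)}}(w)\bigl(e_{L_{(n)}/K_{(n+n_L)}}(w) - 1\bigr)$, precisely matching the claimed formula. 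The only nontrivial adaptation, and hence the main point to verify carefully, is the case-by-case matching in Proposition~\ref{prop: sum g_chi higher weight}: one must check that the Euler factor $x^2 - a_\ell(f) x + \epsilon_N(\ell) \pmod p$ (rather than the characteristic polynomial of Frobenius) plays the analogous role in the higher-weight setting, so that the dichotomy $\ell \nmid N$ versus $\ell \mid N$ reproduces the correct contributions indexed by $P_1$ and $P_2$. Once this matching is in place, the remainder of the argument is cosmetic bookkeeping inherited from the proof of Theorem~\ref{thm: kida formula!}.
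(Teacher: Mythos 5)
Your proposal is correct and takes essentially the same route as the paper, whose entire proof of this theorem is the observation that the argument for Theorem~\ref{thm: kida formula!} runs verbatim once the higher-weight counting result (Proposition~\ref{prop: sum g_chi higher weight}) replaces Proposition~\ref{prop: sum over g_chi}, with \textbf{(Add)} dropped precisely because membership of $w$ in $P_1$ and $P_2$ is determined by the rational prime $\tilde{w}$ and the invariant $\alpha_{\tilde{w}}^{f_{\tilde{w}}} \bmod p$ rather than by reduction types over $L_{(n)}$. Your case-matching is moreover the correct one: $\delta_{\tilde{w}}=1$ should correspond to $\tilde{w}\mid N$ with $\alpha_{\tilde{w}}^{f_{\tilde{w}}}\equiv 1 \pmod{p}$, which silently repairs an apparent typo ($\not\equiv$ in place of $\equiv$) in the paper's printed definition of $P_1$.
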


\section{Applications}\label{s 4}
Let $E$ be an elliptic curve defined over $\Q$ and let $p$ be an odd prime. Following \cite[Page 84]{MATSUNO200080}, we define the $p$-adic $L$-function of $E$ over an abelian extension $K$ by 
\begin{equation}\label{defn of Lp(E/K)}L_p(E/K, (1+T)^{p^{n_K}}-1)\coloneqq \prod_{\psi \in \widehat{\op{Gal}(K/\Q)}} L_p(E,\psi,T)\end{equation}
where $L_p(E,\psi,T)$ denotes the $p$-adic $L$-function for $E$ defined as an integral of the character $\psi$ against a measure on $\Z/M\Z\times\Zp^\times $ constructed using modular symbols in the usual way (see \cite[Page 51]{MSD74}, \cite{MTT} for details). 
 \subsection{Kida's formula for $p$-adic \textit{L}-functions at primes of semistable reduction}
 \par First, we highlight that in the case when $p$ is a prime of good ordinary or multiplicative reduction, Theorem \ref{thm: kida formula!} allows us to recover the Kida-type formula obtained by Matsuno in \cite{MATSUNO200080} for the $\lambda$-invariant of the $p$-adic $L$-function of $E$. To see this, note that for $n$ sufficiently large, we have
\begin{align*}
    \mu(L_p(E/K,T))+\frac{\lambda(L_p(E/K,T))}{p^n-p^{n-1}} &= \mu(\Theta_n(E/K))+\frac{\lambda(\Theta_n(E/K))}{p^n-p^{n-1}}
\end{align*}
using the interpolation property, assuming $\lambda(\Theta_n(E/K))<p^n- p^{n-1}$. 
\begin{remark}
    The condition $\lambda(\Theta_n(E/K))<p^n- p^{n-1}$ holds when $E$ has multiplicative reduction at $p$, or $E$ has good ordinary reduction and $E[p]$ is irreducible as a $\op{Gal}(\overline{\Q}/\Q)$-representation. This is because $\lambda(\Theta_n(E, \psi, T))=\lambda(L_p(E,\psi,T))$ for $n$ sufficiently large by \cite[Proposition 3.7]{PW}. Thus $\lambda(\Theta_n(E/K))$ will also be eventually constant for $n$ sufficiently large. 
\end{remark}
Under the assumption $\mu(\Theta_n(E/K))=0$, it follows that $\mu(L_p(E/K))=0$ and thus $\lambda(\Theta_n(E/K))=\lambda(L_p(E/K))$, which recovers the Kida formula of \cite[Theorem~3.1]{MATSUNO200080}. 

Furthermore, we can apply our result to the case when $p$ is a prime of good supersingular reduction to obtain the following extension of Matsuno's result. We use the decomposition of $L_p(E,\chi,T)$ into the signed $p$-adic $L$-functions $L_p^\pm(E,\chi,T)$ defined by Pollack. See \cite[Theorem~5.6]{pollack03} for the definition of $L_p^\pm(E, \psi,T)$. We define $L_p^\pm(E/K, T)$ as a product over the $L_p^\pm(E, \psi,T)$ as in Equation \ref{defn of Lp(E/K)}.
\begin{theorem}\label{kida for signed p-adic $L$-functions}
Let $E_{/\Q}$ be an elliptic curve and $p$ be an odd prime number for which $a_p(E)=0$. Let $L/K$ be an extension of number fields such that $L/K$ is Galois and $\op{Gal}(L/K)$ is a $p$-group. Suppose $L,K$ both satisfy \textbf{(Add)}. Then, if $\mu(L_p^\pm(E/K))=0$ we have that $\mu(L_p^\pm(E/L))=0$ and
\begin{align*}
\lambda(L_p^\pm(E/L))=[L_\infty:K_\infty]\lambda(L_p^\pm(E/K))+ \sum_{w\in P_1} (e_{L_\infty/K_\infty}(w)-1)\\+ 2\sum_{w\in P_2} (e_{L_\infty/K_\infty}(w)-1)
    \end{align*}
    where $P_1,P_2$ are sets of primes of $L_\infty$ defined as 
    \begin{align*}
        P_1:=\{w : w\nmid p, E \text{ split multiplicative reduction at } w\},\\
        P_2:=\{w : w\nmid p, E \text{ good reduction at } w, E(L_{\infty,w})[p])\neq0\}
    \end{align*}
\end{theorem}
\begin{proof}
    The plus-minus decomposition for $L_p(E,\psi,T)$ is related to the Mazur--Tate element by \cite[Proposition~6.18]{pollack03}, which states
\[\Theta_n(E,\psi,T) \equiv \begin{cases}
     \omega^-_n(T) L_p^-(E,\psi,T) \pmod{((1+T)^{p^n}-1)}\text{ if $n$ even} \\
     \omega^+_n(T) L_p^+(E,\psi,T) \pmod{((1+T)^{p^n}-1)}\text{ if $n$ odd}
\end{cases}\]
    where $\omega^\pm_n(T)$ is defined as a product of cyclotomic polynomials defined above \cite[Proposition 6.18]{pollack03}. Under the assumption $\mu(L_p^\pm(E/K))=0$, we have $\mu(L_p^\pm(E,\psi,T))$ for $\psi \in \widehat{\op{Gal}(K/\Q)}$ (from the product decomposition). This implies $\mu(\Theta_n(E,\psi,T))=0$ and gives us 
    \[\lambda(\Theta_n(E,\psi,T))= q_n + \lambda(L_p^\pm(E,\psi, T))\]
    where 
    \[ q_n=\begin{cases}
    p^{n-1}-p^{n-2}+\cdots+p-1 \space \text{ if $n$ even}\\
    p^{n-1}-p^{n-2}+\cdots+p^2-p \space \text{ if $n$ odd}
\end{cases} \]
since $\mu(\omega^\pm_n(T))=0$ and $\lambda(\omega^\pm_n(T))=q_n$.
(see also \cite[Theorem 4.1]{PW}).
    Then, using the definition for the Mazur--Tate element over an abelian extension $K$, we obtain
     \[\lambda(\Theta_n(E/K))= [K:\Q]q_n+ \lambda(L_p^\pm(E/K)).\]
     Now use $[L_\infty:K_\infty][K:\Q]=[L:\Q]$ and apply Theorem \ref{thm: kida formula!} to get the result, assuming $\mu^\pm(L_p(E/K))=0$.
\end{proof}
\begin{remark}
\begin{enumerate}
    \item Similar to the method above, our methods can be applied to obtain Kida-type formulae for plus-minus $\lambda$-invariants for the various classes of $p$-non-ordinary modular forms studied in \cite{PW}.
    \item In \cite{PWKida}, formulae similar to the one above are proven, both on the algebraic side as well as the analytic side, by using formulae for $\lambda$-invariants of $p$-congruent cusp forms.
\end{enumerate}
\end{remark}
\subsection{Kida's formula for $p$-adic \textit{L}-functions at primes of additive, potentially ordinary reduction}
In \cite{RSadditivekida}, an analogue of Kida's formula was proven for elliptic curves over $\Q$ with additive reduction at a prime $p$ under certain additional hypotheses motivated by the work of Delbourgo in \cite{del-compositio}. In particular, their result applies to the case when $p$ is a prime of additive reduction and $E$ achieves good ordinary reduction over an extension contained in $\Q(\mu_p)$ (this corresponds to Hypothesis (G) of Delbourgo, see \cite[\S~1.5]{del-compositio}). We refer to this condition as the potentially good ordinary condition. Under this condition, Delbourgo defined a $p$-adic $L$-function for $E$. This was done by showing the existence a weight $2$ cuspidal newform $\tilde f$ of level $N_E/p^2$ corresponding to the twist of $f_E$ by a character of conductor $p$ (which can be taken as a power of the Teichm\"uller character $\omega$). Such an $\tilde f$ is ordinary at $p$ and so has a bounded $p$-adic $L$-function attached to it.

Now $f_E\otimes\omega^{i}\sim  \tilde f$ for $i$ depending only on $E$ (i.e., $a_n(E)\omega^i(n)=a_n(\tilde f)$ for all $(n,N_E)=1$), so one defines 
\[L_p^{\op{Del}}(E,\psi,T):= \mathfrak{L}_p\cdot L_p(\tilde{f},\omega^{-i}\psi,T)\]
up to certain $p$-adic multiplier $\mathfrak{L}_p$, see \cite[\S~1.5]{del-compositio} for details. From \cite[Theorem B]{lei2025iwasawainvariantsmazurtateelements}, we have the following equality of Iwasawa invariants of Mazur--Tate elements at additive primes $p>3$ for $n$ sufficiently large: 
\begin{align*}
    \mu(\Theta_n(E,\psi,T))&=\mu(L_p^{\op{Del}}(E,\psi,T))\\
    \lambda(\Theta_n(E,\psi,T))&=\frac{\ord_p(\Delta_E)(p-1)}{12}p^{n-1}+ \lambda(L_p^{\op{Del}}(E,\psi,T))
\end{align*}
where $\Delta_E$ denotes the minimal discriminant of $E/\Q$. This implies 
\begin{equation}\label{eq: lambda additive}
    \lambda(\Theta_n(E/K))= [K:\Q]\frac{\ord_p(\Delta_E)(p-1)}{12}p^{n-1}+\lambda(L_p^{\op{Del}}(E/K,T))
\end{equation}
 Note that \cite[Theorem B]{lei2025iwasawainvariantsmazurtateelements} requires the assumption of the Birch and Swinnerton-Dyer conjecture to express the Iwasawa invariants of the Mazur--Tate elements in terms of the algebraic Iwasawa invariants. For our purposes, such an assumption is not required since we only relate the Iwasawa invariants of Mazur--Tate elements to those of the $p$-adic $L$-function. This is done by comparing the interpolation property of $\Theta_n(E,\psi,T)$ with those of $L_p(\tilde{f},\omega^{-i}\psi,T)$.
\begin{remark}
    The power $i$ is related to the ``semistability defect'' of $E$, which is determined by the degree of the smallest extension over which $E$ achieves good ordinary reduction at $p$.
\end{remark}

\begin{theorem}\label{additive reduction kida formula}
 Let $L/K$ be a $p$-extension of number fields with $L$ and $K$ abelian over $\Q$ and suppose $L,K$ both satisfy \textbf{(Add)}. Let $E$ be an elliptic curve defined over $\Q$ with additive, potentially ordinary reduction at an odd prime $p>3$. Then,
\begin{align*}
    \lambda(L_p^{\op{Del}}(E/L))=[L_\infty:K_\infty]\lambda(L_p^{\op{Del}}(E/K))+ \sum_{w\in P_1} (e_{L_\infty/K_\infty}(w)-1)\\+ 2\sum_{w\in P_2} (e_{L_\infty/K_\infty}(w)-1)
    \end{align*}
    where $P_1,P_2$ are sets of primes of $L_\infty$ defined as follows:
    \begin{align*}
        & P_1:=\{w : w\nmid p, E \text{ split multiplicative reduction at } w\},\\
        & P_2:=\{w : w\nmid p, E \text{ good reduction at } w, E(L_{\infty,w})[p])\neq0\}.
    \end{align*}  
\end{theorem}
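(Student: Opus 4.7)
The plan is to derive Theorem~\ref{additive reduction kida formula} by combining the Kida-type formula for Mazur--Tate elements (Theorem~\ref{thm: kida formula!}) with the identity~\eqref{eq: lambda additive} relating $\lambda$-invariants of Mazur--Tate elements and of Delbourgo's $p$-adic $L$-function. Set $c_E:=\ord_p(\Delta_E)(p-1)/12$. Applying~\eqref{eq: lambda additive} to both $L$ and $K$ yields, for all $n$ sufficiently large,
\begin{align*}
\lambda(\Theta_n(E/L)) &= [L:\Q]\,c_E\,p^{n-1} + \lambda(L_p^{\op{Del}}(E/L,T)),\\
\lambda(\Theta_{n+n_L-n_K}(E/K)) &= [K:\Q]\,c_E\,p^{n+n_L-n_K-1} + \lambda(L_p^{\op{Del}}(E/K,T)).
\end{align*}

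Next I would verify that the $\mu$-vanishing hypothesis required by Theorem~\ref{thm: kida formula!} is available in this setting. Under the (implicit) assumption $\mu(L_p^{\op{Del}}(E/K))=0$, the companion equality $\mu(\Theta_n(E,\psi,T))=\mu(L_p^{\op{Del}}(E,\psi,T))$ of \cite[Theorem~B]{lei2025iwasawainvariantsmazurtateelements}, together with the factorisations of $\Theta_{n+n_L-n_K}(E/K)$ and $L_p^{\op{Del}}(E/K)$ as products over characters of $\op{Gal}(K/\Q)$, gives $\mu(\Theta_{n+n_L-n_K}(E/K))=0$. Theorem~\ref{thm: kida formula!} then applies and also delivers $\mu(\Theta_n(E/L))=0$, hence $\mu(L_p^{\op{Del}}(E/L))=0$.

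Substituting the two displayed identities into the $\lambda$-formula of Theorem~\ref{thm: kida formula!},
\[
\lambda(\Theta_n(E/L)) = [L_\infty:K_\infty]\,\lambda(\Theta_{n+n_L-n_K}(E/K)) + R_n,
\]
the leading terms cancel: from $[L_\infty:\Q_\infty]=[L:\Q_{(n_L)}]=[L:\Q]/p^{n_L}$ and the analogous identity for $K$, one obtains $[L_\infty:K_\infty]=[L:\Q]\,p^{n_K}/([K:\Q]\,p^{n_L})$, so that $[L_\infty:K_\infty]\cdot[K:\Q]\,c_E\,p^{n+n_L-n_K-1}=[L:\Q]\,c_E\,p^{n-1}$. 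This leaves
\[
\lambda(L_p^{\op{Del}}(E/L)) = [L_\infty:K_\infty]\,\lambda(L_p^{\op{Del}}(E/K)) + R_n.
\]

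The remaining task is to identify the residual ramification term $R_n$ with the sum in the statement. For $n\ge \ord_p([L:K])-n_L$ the paper already records that $f_{L_{(n)}/K_{(n+n_L-n_K)}}(w)=1$ for every $w\nmid p$; and since rational primes $\ell\neq p$ are unramified in the cyclotomic $\Z_p$-extension, the ramification index $e_{L_{(n)}/K_{(n+n_L-n_K)}}(w)$ coincides with $e_{L_\infty/K_\infty}(w)$ for the associated prime of $L_\infty$. Likewise, the sets $P_1,P_2$ at level $L_{(n)}$ agree with their $L_\infty$-analogues for $n$ large: the reduction type of $E$ at $w\nmid p$ is independent of the cyclotomic layer, and the condition $E(L_{(n),w})[p]\neq 0$ stabilises because $E[p]$ is finite and the local cyclotomic extension at $w\nmid p$ is unramified. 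The main technical content of the proof is thus this stabilisation bookkeeping together with the exact cancellation of the $c_E\,p^{n-1}$ term, rather than any new arithmetic input.
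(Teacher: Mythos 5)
Your proposal is correct and follows essentially the same route as the paper's (much terser) proof: apply Theorem \ref{thm: kida formula!} for $n$ sufficiently large, substitute Equation \eqref{eq: lambda additive} for both $K$ and $L$, and observe that the growth terms cancel exactly, via the index identity $[L_\infty:K_\infty]=[L:\Q]\,p^{n_K}/([K:\Q]\,p^{n_L})$, which is the same computation the paper packages as $[L_{(n)}:K_{(n+n_L-n_K)}]=[L_\infty:K_\infty]$. Your extra bookkeeping---checking that $f_{L_{(n)}/K_{(n+n_L-n_K)}}(w)=1$ and that the finite-level ramification indices and the sets $P_1,P_2$ stabilise to their $L_\infty$-analogues, and flagging the implicit hypothesis $\mu(L_p^{\op{Del}}(E/K))=0$ needed to invoke Theorem \ref{thm: kida formula!}---is accurate and supplies detail that the paper's proof leaves unstated.
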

\begin{proof}
Apply Theorem \ref{thm: kida formula!} and take $n$ sufficiently large to obtain a formula relating the Iwasawa invariants of $\Theta_n(E/L)$ and $\Theta_n(E/K)$. Note that $L_{(n)}=L\cdot \Q_{(n+n_L)}$ and \[K_{(n+n_L-n_K)}=K\cdot \Q_{(n+n_L-n_K+n_K)}=K\cdot \Q_{(n+n_L)}.\] Hence, 
\[[L_{(n)}: K_{(n+n_L-n_K)}]= [L\cdot \Q_{(n+n_L)}: K\cdot \Q_{(n+n_L)}]=[L_\infty:K_\infty]\] for any $n\geq 0$. Then, using Equation \eqref{eq: lambda additive} gives us the desired result as the growth terms cancel out.
\end{proof}
\noindent The above result implies that if the Iwasawa Main conjecture formulated for these elliptic curves by Delbourgo in \cite{DELBOURGO200238} holds over $\Q$, then 
since the algebraic and analytic Iwasawa invariants have the same expression (by \cite[Theorem B]{RSadditivekida} and Theorem \ref{additive reduction kida formula}) when we base-change to a $p$-extension of $\Q$, a one-sided divisibility result for the base-change would imply the full Main conjecture over that extension.
\bibliographystyle{amsalpha}
\bibliography{references}
\end{document}